\tikzset{
	modal/.style={>=stealth’,shorten >=1pt,shorten <=1pt,auto,node distance=1.5cm,
		semithick},
	world/.style={circle, draw,minimum size=.1cm,fill=gray!15},
	point/.style={circle,draw,inner sep=0.3mm,fill=black},
	circ/.style={circle,draw,inner sep=0.1mm,fill=white},
	reflexive above/.style={->,loop,looseness=7,in=120,out=60},
	reflexive below/.style={->,loop,looseness=7,in=240,out=300},
	reflexive left/.style={->,loop,looseness=7,in=150,out=210},
	reflexive right/.style={->,loop,looseness=7,in=30,out=330}
}
\theoremstyle{definition}
\newtheorem{defn}{Definition}[section]
\newtheorem{fact}[defn]{Fact}
\newtheorem{thm}[defn]{Theorem}
\newtheorem{lem}[defn]{Lemma}
\newtheorem{question}[defn]{Question}
\newtheorem{remark}[defn]{Remark}
\newtheorem{observation}[defn]{Observation}
\newtheorem{claim}[defn]{Claim}
\title[Brooks' type theorems and K\H{o}nig's Lemma]{Brooks' type theorems 
for coloring parameters of locally finite graphs and K\H{o}nig's Lemma}
\author{Amitayu Banerjee$^{\ast}$}
\address{Alfr\'ed R\'enyi Institute of Mathematics, Reáltanoda utca 13-15, 1053, Budapest, Hungary}
\email[Corresponding author]{banerjee.amitayu@gmail.com}
\thanks{$^{\ast}$ Corresponding author.}
\author{Zal\'{a}n Moln\'{a}r}
\address{E\"otv\"os Lor\'and University, Department of Logic, M\'{u}zeum krt. 4, 1088, Budapest, Hungary}
\email{mozaag@gmail.com}
\author{Alexa Gopaulsingh}
\address{E\"otv\"os Lor\'and University, Department of Logic, M\'{u}zeum krt. 4, 1088, Budapest, Hungary}
\email{alexa279e@gmail.com}
\date{}
\subjclass[2020]{Primary 03E25; Secondary 05C63, 05C15, 05C25.}
\keywords{Axiom of Choice, K\H{o}nig's Lemma, Brooks’ theorem, Distinguishing proper coloring, Total coloring, List-distinguishing proper coloring}
\begin{document}
\begin{abstract}
In the past, analogues to Brooks’ theorem have been found for various parameters of graph coloring for infinite locally finite connected graphs in $\mathsf{ZFC}$. We prove these theorems are not provable in $\mathsf{ZF}$ (i.e., the Zermelo–Fraenkel set theory without the Axiom of Choice ($\mathsf{AC}$)).
Moreover, such theorems follow from K\H{o}nig's Lemma (every infinite locally finite connected graph has a ray--a weak form of $\mathsf{AC}$) in $\mathsf{ZF}$.
In $\mathsf{ZF}$, inspired by a combinatorial argument of Herrlich and Tachtsis from 2006, we formulate new conditions for the existence of the distinguishing chromatic number, the distinguishing chromatic index, the total chromatic number, the total distinguishing chromatic number, the odd chromatic number, and the neighbor-distinguishing index in infinite locally finite connected graphs, which are equivalent to
K\H{o}nig’s Lemma. In this direction, we strengthen a recent result of Stawiski from 2023.
We also generalize an algorithm of Imrich, Kalinowski, Pil\'{s}niak, and Shekarriz to show that the statement ``If $G$ is a connected infinite graph where the maximum degree $\Delta(G) \geq 3$ is finite, then the list-distinguishing chromatic number is at most $2\Delta(G)-1$'' holds under K\H{o}nig's Lemma in $\mathsf{ZF}$. However, the above statement fails in $\mathsf{ZF}$. 
\end{abstract}
\maketitle

\section{Introduction}
In 1941, Brooks \cite{Bro1941} proved that for any connected undirected finite graph $G$ with maximum degree $\Delta(G)$, the chromatic number $\chi(G)$ of $G$ is at most $\Delta(G)$, unless $G$ is a complete graph or an odd cycle. Brooks' theorem for connected infinite graphs was proved in 1976 by Behzad and Radjavi \cite{BR1976}. Since then, several authors have found general upper bounds for various coloring parameters of an infinite graph $G$ (cf. \cite{BR1976, BP2015, IKT2007, IKPS2017, LPS2022, PS2020}). 
For the reader’s convenience, complete definitions of the parameters for graph coloring are stated in the Appendix, so that the proofs appear earlier.

\begin{fact}\label{Fact 1.1} ($\mathsf{ZFC}$)
{\em If $G$ is a connected infinite graph where $\Delta(G)$ is finite, then the
following holds:

\begin{enumerate}
    \item {(Behzad and Radjavi; \cite[Theorem 3]{BR1976})} The chromatic number $\chi(G)$ is at most $\Delta(G)$.
    
    \item {(Behzad and Radjavi; \cite[Theorem 7]{BR1976})} The chromatic index $\chi'(G)$ is at most $\Delta(G)+1$.
    
    \item {(Imrich, Klav\v{z}ar, and Trofimov; \cite[Theorem 2.1]{IKT2007})} The distinguishing number $D(G)$ is at most $\Delta(G)$.

    \item {(Broere and Pil\'{s}niak; \cite[Theorem 4]{BP2015})} The distinguishing index $D'(G)$ is at most $\Delta(G)$.

    \item {(Imrich, Kalinowski, Pil\'{s}niak, and Shekarriz; \cite[Theorem 3]{IKPS2017})} The distinguishing chromatic number $\chi_{D}(G)$ is at most $2\Delta(G)-1$.
    
    \item {(Imrich, Kalinowski, Pil\'{s}niak, and Shekarriz; \cite[Theorem 14]{IKPS2017})} The distinguishing chromatic index $\chi'_{D}(G)$ is at most $\Delta(G)+2$.

    \item {(Lehner, Pil\'{s}niak, and Stawiski; \cite[Theorem 3]{LPS2022})} If $\Delta(G)\geq 3$, then $D(G)\leq\Delta(G)-1$.

    \item {(Pil\'{s}niak and Stawiski; \cite[Theorem 3]{PS2020})} If $\Delta(G)\geq 3$, then $D'(G)\leq \Delta(G)-1$. 

    \item  {(H\"{u}ning, Imrich, Kloas, Schreiber, and Tucker; \cite[Corollary 2]{HIKST2019})} If $\Delta(G)=3$, then $D(G)\leq 2$.
\end{enumerate}
}
\end{fact}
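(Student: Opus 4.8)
\emph{Proof plan.} All nine items are quoted verbatim from the literature, so the plan is not to reprove them but to record the common architecture of their proofs --- finite combinatorial input together with a compactness step --- since that architecture is exactly what the rest of the paper isolates over $\mathsf{ZF}$.

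For (1) and (2) the plan is a K\H{o}nig's Lemma compactness argument. First I would observe that every finite subgraph $H\subseteq G$ satisfies $\Delta(H)\le\Delta(G)$, so Brooks' theorem gives $\chi(H)\le\Delta(G)$ unless some connected component of $H$ is a clique $K_{\Delta(G)+1}$ or, when $\Delta(G)=2$, an odd cycle; but if such a component occurred then, $\Delta(G)$ being the maximum degree in $G$, all of its vertices would have no neighbors outside it, so it would be a connected component of $G$ itself, forcing $G$ to equal that finite clique or cycle --- contradicting that $G$ is connected and infinite. Hence every finite subgraph of $G$ is properly $\Delta(G)$-colorable. Since a connected locally finite graph is countable, I would then fix an enumeration $v_1,v_2,\dots$ of its vertices and form the tree whose level-$n$ nodes are the proper $\Delta(G)$-colorings of $G[\{v_1,\dots,v_n\}]$, ordered by restriction; this tree is infinite by the previous step and finitely branching (at most $\Delta(G)^n$ nodes at level $n$), so K\H{o}nig's Lemma produces a branch, that is, a proper $\Delta(G)$-coloring of all of $G$. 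Item (2) is the identical argument with Vizing's theorem ($\chi'(H)\le\Delta(H)+1$) in place of Brooks' theorem and partial proper edge-colorings in place of vertex-colorings. (Over $\mathsf{ZFC}$ one may instead invoke the De Bruijn--Erd\H{o}s compactness theorem.)

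For the distinguishing parameters (3), (4), (7), (8), (9) a pure finite-compactness argument does not suffice, since ``breaking every automorphism'' is not a property of finite subgraphs. Here I would fix a root $r$, use the breadth-first layers $L_k=\{v:d(r,v)=k\}$, which are finite by local finiteness, and color $G$ layer by layer so that the coloring of $\bigcup_{i\le k}L_i$ already forces every color-preserving automorphism fixing $r$ to fix that set pointwise; that this can be arranged with only $\Delta(G)$ colors --- and, when $\Delta(G)\ge 3$ in (7),(8) or $\Delta(G)=3$ in (9), with still fewer --- is a local counting argument on the colored types available around each vertex. Because an automorphism of a connected locally finite graph fixing $r$ is determined by its (compatible) restrictions to the finite balls $B(r,k)$, a coloring that pins down every ball pins down the whole automorphism, so the limiting coloring is distinguishing; assembling the layer choices into a single coloring is once more a compactness / K\H{o}nig's Lemma step. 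Items (5) and (6) superimpose the two schemes: I would run the construction of the previous paragraph but with $2\Delta(G)-1$ colors (respectively $\Delta(G)+2$ colors for the index version), using the surplus colors as the slack needed to carry out the automorphism-killing procedure layer by layer while keeping the coloring a proper (edge-)coloring.

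The hard part is not the compactness step but the finite input to it: checking, at each layer, that $\Delta(G)$ --- or $2\Delta(G)-1$, or $\Delta(G)+2$ --- colors really do suffice to respect the (edge-)coloring constraint and destroy every partial automorphism simultaneously. This is exactly the content of the cited theorems, handled there by a somewhat intricate case analysis isolating the few exceptional small configurations, and I would import those arguments from \cite{BR1976,IKT2007,BP2015,IKPS2017,LPS2022,PS2020,HIKST2019} rather than redo them.
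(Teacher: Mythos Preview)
The paper does not prove Fact~\ref{Fact 1.1}: it is stated as a fact with citations to the original sources, and the paper's own contributions concern its failure in $\mathsf{ZF}$ and its relationship to K\H{o}nig's Lemma (see Remark~\ref{Remark 8.1}). You correctly recognize this and your sketch of the common architecture --- De Bruijn--Erd\H{o}s style compactness for (1)--(2), BFS layer-by-layer constructions for the distinguishing parameters --- accurately reflects the methods in the cited papers and matches what the paper itself gestures at in Remark~\ref{Remark 8.1} and the proof of Theorem~\ref{Theorem 3.3}.

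One small correction: for the distinguishing parameters (3)--(9), the layer-by-layer construction is already a direct definition of the full coloring, so no separate compactness or K\H{o}nig's Lemma step is needed to ``assemble the layer choices'' at the end; the only role of choice is in enumerating $V_G$ (equivalently, knowing the locally finite connected graph is countable), after which the coloring is built explicitly by recursion along the BFS order. This is exactly the distinction the paper exploits when it observes (Remark~\ref{Remark 8.1}) that the algorithms in the cited references go through in $\mathsf{ZF}$ once $V_G$ is countable.
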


The coloring parameters can be drastically influenced by the inclusion or exclusion of $\mathsf{AC}$ (cf. \cite{BMG2024, GK1991, HT2006, HR2005, Soi2005, SS2004, SS2003, Sta2023}).
Galvin--Komj\'{a}th \cite{GK1991} proved that $\mathsf{AC}$ is equivalent to the statement ``Every graph has a chromatic number". 
In a series of papers, Shelah and Soifer \cite{Soi2005, SS2004, SS2003} have constructed some graphs whose chromatic number is finite in $\mathsf{ZFC}$ and uncountable (if it exists) in some models of $\mathsf{ZF}$ (e.g. Solovay's model from \cite[Theorem 1]{Sol1970}).
Herrlich--Tachtsis \cite[Proposition 23]{HT2006} proved that no Russell graph has a chromatic number in $\mathsf{ZF}$.\footnote{The reader is referred to Herrlich–Tachtsis \cite{HT2006} for the details concerning Russell graph.} 
The above results are the main motivation for us to investigate the status of the results in Fact \ref{Fact 1.1} when $\mathsf{AC}$ is removed.

\subsection{Results in $\mathsf{ZFC}$:}
Analogues to Brooks' theorems for the total chromatic number \cite{MR1998}, the total distinguishing number \cite[Theorem 2.2]{KPW2016}, the total distinguishing chromatic number \cite[Theorem 4.2]{KPW2016}, and the odd chromatic number \cite[Theorem 4.9]{CPS2022} were investigated for finite graphs (cf. Fact \ref{Fact 3.1}). 
Ferrara et al. \cite{FGHSW2013} extended the notion of distinguishing proper coloring to a list distinguishing proper coloring and introduced the notion of list distinguishing chromatic number for finite graphs. 
We prove the following:

\begin{enumerate}
    \item (Theorem \ref{Theorem 3.3}) In $\mathsf{ZF}$, K\H{o}nig’s Lemma implies the statement {\em ``If $G$ is a connected infinite graph where $\Delta(G)\geq 3$ is finite, then 
    \begin{enumerate}
        \item the total distinguishing number $D''(G)$ is at most $\lceil \sqrt{\Delta(G)}\rceil$,
        \item the odd chromatic number $\chi_{o}(G)$ is at most $2\Delta(G)$,
        \item the total chromatic number $\chi''(G)$ is at most $\Delta(G)+10^{26}$, and
        \item the total distinguishing chromatic number $\chi''_{D}(G)$ is at most $\Delta(G)+10^{26}+1$''.
    \end{enumerate}
}  

The results mentioned in (1) are {\em new results in $\mathsf{ZFC}$} concerning the estimation of the upper bound of the listed coloring parameters for infinite
locally finite connected graphs.
These results are extensions of the known results from Fact \ref{Fact 3.1} to infinite locally finite connected graphs.

    \item (Theorem \ref{Theorem 7.4}) In $\mathsf{ZF}$, K\H{o}nig’s Lemma implies the statement {\em ``If $G$ is a connected infinite graph where $\Delta(G)\geq 3$ is finite, then the list-distinguishing chromatic number $\chi_{D_{L}}(G)$ is at most $2\Delta(G)-1$''.
}  

Obtaining the upper bounds for the list-distinguishing chromatic number of infinite graphs were not previously investigated in $\mathsf{ZFC}$. Thus, it seems that the result mentioned in (2) is a {\em new result in $\mathsf{ZFC}$} concerning the estimation of the upper bound of the list-distinguishing chromatic number for infinite locally finite connected graphs. Inspired by the arguments of Imrich, Kalinowski, Pil\'{s}niak, and Shekarriz \cite[Theorem 3]{IKPS2017}, we use the methods in this note and generalize the algorithm of \cite[Theorem 3]{IKPS2017} to prove the statement of (2).

\item In Remark \ref{Remark 7.3}, we show that the inequality ``$\chi_{D}(G)< \chi_{D_{L}}(G)$" holds in general for a connected infinite graph $G$ where $\Delta(G)\geq 3$ is finite in $\mathsf{ZFC}$. 
    Thus, the result mentioned in the statement of (2) strengthens Fact \ref{Fact 1.1}(5) (cf. Theorem \ref{Theorem 7.4}).\footnote{The result mentioned in (1) also provides new upper bounds for the corresponding coloring parameters
for connected infinite graphs in $\mathsf{ZFC}$. Theorems \ref{Theorem 3.3}, \ref{Theorem 4.1}, \ref{Theorem 5.1}, \ref{Theorem 6.3}, \ref{Theorem 7.4} are due to the first two authors. Moreover, Theorem \ref{Theorem 6.1}, and Remark \ref{Remark 7.3} are due to all the authors.}
\end{enumerate}

\subsection{Consistency results in $\mathsf{ZF}$:}

\begin{enumerate}[{(A)}]
\item (Theorems \ref{Theorem 6.1}, \ref{Theorem 6.3}, \ref{Theorem 7.4}) There are models of $\mathsf{ZF}$ where the statements in Fact \ref{Fact 1.1}(1)-(9), (1), and (2) fail.

\item Stawiski \cite{Sta2023} proved that the statement 
``Every infinite connected subcubic graph $G$ (i.e. $\Delta(G)=3$) has a distinguishing number''
fails in some model of $\mathsf{ZF}$ if one assumes that the sets of colors can be well-ordered  (cf. \cite[Lemma 3.3 and section 2]{Sta2023}). 
In Theorem \ref{Theorem 6.3}, we 
strengthen Stawiski's result.
In particular, we show that Stawiski's result holds even if the sets of colors cannot be well-ordered.     
\end{enumerate}     

\subsection{New equivalents of K\H{o}nig's Lemma:}
Inspired by a combinatorial argument of Herrlich–Tachtsis \cite[Proposition 23]{HT2006}, the first two authors investigate the role of K\H{o}nig’s Lemma in the existence of the total chromatic number, and some recently studied coloring parameters such as the odd chromatic number \cite{CPS2022}, the distinguishing chromatic number \cite{IKPS2017}, the distinguishing chromatic index \cite{IKPS2017}, the total distinguishing chromatic number \cite{IKPS2017}, and the neighbor-distinguishing index \cite{HCWH2023}. The following table summarizes the new results (cf. Theorems \ref{Theorem 4.1}, \ref{Theorem 5.1}) as well as the published results \cite{Ban2023, BMG2024, DM2006, Sta2023} on this topic.\footnote{K\H{o}nig's Lemma has several known graph-theoretic equivalents. Fix any integer $m \geq 4$. Delhomm\'{e}--Morillon \cite{DM2006}, Banerjee \cite{Ban2023, Ban}, Banerjee--Moln\'{a}r--Gopaulsingh \cite[Theorems 4.2, 5.1]{BMG2024} and Stawiski \cite[Theorem 3.8]{Sta2023} proved that K\H{o}nig’s lemma is equivalent to the statement ``Every infinite locally finite connected graph has a chromatic number ({\em resp.} a chromatic index, a distinguishing number, a distinguishing index, an irreducible proper coloring, a spanning tree, a spanning $m$-bush, a maximal independent sets, a minimal edge cover, a maximal matching, a minimal dominating set)".}
In the following table, $\mathcal{P}_{\text{lf,c}}$(property $X$) denotes ``Every infinite locally finite connected graph has property $X$".

\begin{center}
\begin{tabular}{|l|l|l| } 
			\hline Known equivalents of K\H{o}nig's lemma & New equivalents of K\H{o}nig's lemma \\ 
                \hline\hline
                $\mathcal{P}_{\text{lf,c}}$(distinguishing number) & $\mathcal{P}_{\text{lf,c}}$(distinguishing chromatic number) \\ 
			$\mathcal{P}_{\text{lf,c}}$(distinguishing index)  & $\mathcal{P}_{\text{lf,c}}$(distinguishing chromatic index)\\
                $\mathcal{P}_{\text{lf,c}}$(chromatic number)  & $\mathcal{P}_{\text{lf,c}}$(total chromatic number)\\
            $\mathcal{P}_{\text{lf,c}}$(chromatic index)  & $\mathcal{P}_{\text{lf,c}}$(total distinguishing chromatic number)\\
                $\mathcal{P}_{\text{lf,c}}$(irreducible proper coloring)  & $\mathcal{P}_{\text{lf,c}}$(neighbor-distinguishing number) \\

            $\mathcal{P}_{\text{lf,c}}$(spanning tree)  & $\mathcal{P}_{\text{lf,c}}$(neighbor-distinguishing index) \\
                
			$\mathcal{P}_{\text{lf,c}}$(maximal independent set)   & $\mathcal{P}_{\text{lf,c}}$(odd chromatic number) \\    
            \hline 
            
		\end{tabular}
\end{center}


\subsection{Remarks and Questions}
In Remark \ref{Remark 8.1}, we observe that the statements (1)-(6), (9) in Fact \ref{Fact 1.1} follow from K\H{o}nig's Lemma in $\mathsf{ZF}$. We also add some open questions of interest.
\section{Basics}
\begin{defn}\label{Definition 2.1}
Let $X$ and $Y$ be two sets. We write:	
\begin{enumerate}
	\item $X \preceq Y$, if there is an injection $f : X \rightarrow Y$.
	\item $X$ and $Y$ are equipotent if $X \preceq Y$ and $Y \preceq X$, i.e., if there is a bijection $f : X \rightarrow Y$.
	\item $X \prec Y$, if $X \preceq Y$ and $X$ is not equipotent with $Y$.
\end{enumerate}
\end{defn}
 \begin{defn}\label{Definition 2.2}
The von Neumann hierarchy is defined by recursion on the ordinals:
\begin{enumerate}
    \item $V_{0}=\emptyset$,
    \item $V_{\alpha +1}=\mathcal{P}(V_{\alpha})$ where $\mathcal{P}(X)$ is the powerset of a set $X$,
    \item $V_{\alpha}=\bigcup_{\beta<\alpha}V_{\beta}$ if $\alpha$ is a limit ordinal.
\end{enumerate}
The von Neumann universe $V$ is defined as $\bigcup_{\alpha \in Ord}{V_{\alpha}}$ where $Ord$ is the class of all ordinals.
The {\em rank} of a set $x$ is the least ordinal $\alpha$ such that $x \in V_{\alpha+1}$ (see \cite[The paragraph after Lemma 6.3 in Section 6; Pages 64 and 65]{Jec2003}). Without $\mathsf{AC}$, a set $m$ is called a {\em cardinal} if it is the cardinality $\vert x\vert$ of some set $x$, where 
		$\vert x\vert$ = $\{y : y \sim x$ and $y$ is of least rank$\}$ where $y \sim x$ means the
		existence of a bijection $f : y \rightarrow x$ 
		(see \cite[Definition 2.2, p. 83]{Lev2002} and \cite[Section 11.2]{Jec1973}).
\end{defn}

\begin{defn}\label{Definition 2.3}
A topological space $(X,\tau)$ is called {\em compact} if for every $U \subseteq \tau$ such that $\bigcup U = X$ there is a finite subset $V\subseteq U$ such that $\bigcup V = X$.
\end{defn}

\begin{defn}\label{Definition 2.4}
   Let $\omega$ be the set of all natural numbers. A set $X$ is {\em Dedekind-finite} if it satisfies the following equivalent conditions (cf. \cite[Definition 1]{HT2006}):
	\begin{itemize}
			\item $\omega\not\preceq X$,
			\item  $A \prec X$ for every proper subset $A$ of $X$.
	\end{itemize}
    A set $X$ is {\em finite} if there exists an $n \in \omega$ and a bijection $f: X \rightarrow n$. Otherwise, $X$ is called {\em infinite}.\footnote{Clearly, finite sets are Dedekind-finite sets. In $\mathsf{ZFC}$, but not in $\mathsf{ZF}$, Dedekind-finite sets are finite; see \cite{Jec1973}.}
\end{defn}

\begin{defn}\label{Definition 2.5}
    For every family $\mathcal{B}=\{B_{i}:i\in I\}$ of non-empty sets, $\mathcal{B}$ is said to have a {\em partial choice function} if  $\mathcal{B}$ has an infinite subfamily $\mathcal{C}$ with a choice function. The following weak choice forms are equivalent to K\H{o}nig's lemma (cf. \cite{HR1998}, \cite[the proof of Theorem 4.1(i)]{DHHKR2008}):
    \begin{enumerate}
    \item $\mathsf{AC_{fin}^{\omega}}$: Any countably infinite family of non-empty finite sets has a choice function.
    \item $\mathsf{PAC^{\omega}_{fin}}$: Any countably infinite family of non-empty finite sets has a partial choice function.
    \item The union of a countably infinite family of finite sets is countably infinite.
    \end{enumerate}
\end{defn}

\section{Known Results and basic results under K\H{o}nig's Lemma}

\subsection{Known Results}

\begin{fact}\label{Fact 3.1}($\mathsf{ZF}$)
{\em The following holds:
\begin{enumerate}
    \item (Kalinowski, Pil\'{s}niak, and Wo\'{z}niak; \cite[Theorem 2.2]{KPW2016})
    If $G$ is a finite connected graph of order $n \geq 3$, then $D''(G)\leq \lceil \sqrt{\Delta(G)}\rceil$.
    \item (Caro, Petru{\v{s}}evski, and {\v{S}}krekovski; \cite[Theorem 4.9, Corollary 4.10]{CPS2022})
    If $G$ is a finite connected graph where $\Delta(G) \geq 1$, then $\chi_{o}(G) \leq 2\Delta(G)$ unless $G$ is the cycle graph $C_{5}$.
    \item (Molloy and Reed; \cite{MR1998})
    If $G$ is a finite graph, then $\chi''(G) \leq \Delta(G)+10^{26}$.
    \item (Loeb; \cite[Theorem 1]{Loeb1965})
    Let $\{X_{i}\}_ {i \in I}$ be a family of compact spaces that is indexed by a well-orderable set $I$. If there is a choice function $F$ on the collection $\{C$ : C is
    closed, $C \not= \emptyset, C \subset X_{i}$ for some $i\in I\}$, then the product space
    $\prod_{i\in I}X_{i}$ is compact in the product topology.
\end{enumerate}
}
\end{fact}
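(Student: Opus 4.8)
Fact~\ref{Fact 3.1} collects four results established elsewhere, so the plan is not to prove them but to import them verbatim and use them as black boxes in the sequel; for orientation I would recall the shape of each argument. For (1) I would follow Kalinowski--Pil\'{s}niak--Wo\'{z}niak: put $k=\lceil\sqrt{\Delta(G)}\rceil$, so that $k^{2}\ge\Delta(G)$, fix a breadth-first spanning tree of the finite connected graph rooted at some $r$, and color $V\cup E$ level by level --- at each already-colored vertex $v$, its at most $\Delta(G)$ tree-children are assigned pairwise distinct pairs (color of the connecting tree-edge, color of the child vertex), which is possible since there are $k^{2}\ge\Delta(G)$ such pairs. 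An induction on the level then forces any color-preserving automorphism to be the identity, the non-tree edges only adding constraints; some care is needed to anchor the root, which they handle.

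For (2) I would simply cite Caro--Petru{\v{s}}evski--{\v{S}}krekovski: their proof is a structural/discharging analysis producing, for every finite connected graph with $\Delta(G)\ge 3$ other than $C_{5}$, a proper coloring with $2\Delta(G)$ colors in which every non-isolated vertex sees some color an odd number of times among its neighbors (the exception $C_{5}$ being genuine). For (3) I would likewise cite Molloy--Reed: $\chi''(G)\le\Delta(G)+10^{26}$ is a heavy application of the probabilistic method, combining concentration inequalities with the Lov\'{a}sz Local Lemma, and $10^{26}$ is merely a crude explicit value of the absolute constant the method yields. For (4) I would reproduce Loeb's argument: along the given well-ordering of $I$, proceed by transfinite recursion on the coordinates, at each step using compactness of $X_{i}$ together with the choice function on nonempty closed subsets to pin that coordinate down while keeping the family of closed sets cut out by finitely many coordinates centered; the resulting point of $\prod_{i\in I}X_{i}$ witnesses that every centered family has nonempty intersection, i.e.\ compactness.

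The only genuine obstacle, were one to insist on self-contained proofs, is (3): the Molloy--Reed bound is one of the deepest results in graph coloring and admits no short proof, so it must be taken as given; (1) and (2) are elementary-to-moderate finite combinatorics and (4) is a clean transfinite recursion in $\mathsf{ZF}$. Accordingly the ``proof'' of Fact~\ref{Fact 3.1} amounts to assembling these references, and the real work lies in the later transfer of (1)--(3) to infinite locally finite connected graphs via K\H{o}nig's Lemma.
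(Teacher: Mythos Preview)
Your reading is correct and matches the paper exactly: Fact~\ref{Fact 3.1} is presented there without proof, as a bundle of four results quoted verbatim from the cited sources and used as black boxes thereafter. Your sketches of the original arguments are accurate orientation, but no additional work is expected here.
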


From Fact \ref{Fact 3.1}(2), we can see that for any graph $G$ (connected or disconnected) where $\Delta(G)\geq 3$, we have $\chi_o(G) \leq 2\Delta(G)$. Moreover, if $\Delta(G)\leq 2$, then $\chi_o(G) = 2\Delta(G)+1$ if and only if $G$ has $C_5$ as a connected component, otherwise $\chi_o(G)\leq 2\Delta(G)$.
\subsection{Basic results under K\H{o}nig's Lemma}

\begin{observation}\label{Observation 3.2}
{($\mathsf{ZF}$)}{\em Every graph $G=(V_{G}, E_{G})$ based on a well-ordered set of vertices has a distinguishing chromatic number, a distinguishing chromatic index, a total chromatic number, a total distinguishing number, a total distinguishing chromatic number, a neighbor-distinguishing number, a neighbor-distinguishing index, and an odd chromatic number. Moreover, if $G$ is connected, then it also has a breadth-first search (BFS) spanning tree rooted at $v_{0}$ for each $v_{0}\in V_{G}$.}
\end{observation}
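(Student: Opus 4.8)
The plan is to reduce all eight existence claims, and the spanning-tree claim, to one elementary $\mathsf{ZF}$ fact: \emph{the image of a well-orderable set under a function is well-orderable}, hence has an aleph for its cardinality. First I would note that, with $V_G$ well-ordered, every ground set these colorings act on is well-orderable: $E_G\subseteq[V_G]^2\hookrightarrow V_G\times V_G$ and a product of two well-ordered sets is well-ordered, so $E_G$, and thus $V_G\cup E_G$ (vertices and edges treated as distinct objects), is well-orderable; write $M$ for the cardinality of whichever of $V_G$, $E_G$, $V_G\cup E_G$ is relevant in a given case, an aleph. Consequently \emph{any} coloring of any of these ground sets uses a set of colors (its image) that is well-orderable, of some aleph cardinality $\le M$. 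Now each listed parameter $\pi(G)$ is, by its definition in the Appendix, the least cardinal $\kappa$ such that $G$ admits a coloring of the prescribed type with $\kappa$ colors, and to say ``$G$ has a $\pi$'' is to say that this least cardinal exists — which, $\mathsf{AC}$ being absent, is not automatic, since cardinals need not be comparable. But under our hypothesis the collection $S$ of all alephs $\lambda\le M$ for which a coloring of the prescribed type with $\lambda$ colors exists is a set of ordinals, so once we know $S\neq\varnothing$ it has a least element $\lambda_0$; and since by the previous sentence \emph{every} coloring of the prescribed type realizes some aleph in $S$, that $\lambda_0$ is the genuine minimum, i.e.\ $\pi(G)=\lambda_0$ exists. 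So everything reduces to exhibiting, in each case, one coloring of the prescribed type.

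For this I would use, uniformly, the \emph{injective} coloring assigning to each object of the relevant ground set its own rank in the fixed well-order; it uses $M$ colors. One checks immediately that it is a proper (total) coloring, since no two colored objects share a color; that it is an odd coloring, since in any non-empty neighbourhood every color appears exactly once, an odd number of times; that it is neighbor-distinguishing, since for an edge $uv$ the color of $v$ (resp.\ of the edge $uv$) lies among the colors seen at $u$ but, there being no loops, not among those seen at $v$; and that it is distinguishing for each distinguishing variant, since an automorphism preserving an injective coloring of $V_G$ (resp.\ $E_G$, resp.\ $V_G\cup E_G$) fixes every vertex (resp.\ every edge, resp.\ both) and hence is the identity. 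The few small graphs for which the edge-distinguishing conclusions genuinely fail — above all $K_2$, whose transposition preserves every edge-coloring — are precisely those set aside by the conventions recalled in the Appendix, and do not occur among the infinite connected graphs to which the observation is later applied.

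For the BFS spanning tree, given connected $G$ and $v_0\in V_G$, put $L_0=\{v_0\}$ and $L_{n+1}=N_G(L_n)\setminus\bigcup_{i\le n}L_i$ for $n<\omega$; each $L_n$ is a definable set, so this recursion needs no choice, and connectedness (each vertex is joined to $v_0$ by a finite walk) forces $\bigcup_{n<\omega}L_n=V_G$. Here, and only here, I would use the well-order on $V_G$: for $v\in L_{n+1}$ set $p(v)=\min\{u\in L_n:u\sim_G v\}$. The edge set $\{\{v,p(v)\}:v\in V_G\setminus\{v_0\}\}$ is spanning, is acyclic because levels strictly decrease along parent-edges, and is connected because iterating $p$ leads from any $v\in L_n$ to $v_0$ in $n$ steps; so it is a BFS spanning tree rooted at $v_0$. (Without a well-order, the selection of parents would require a weak form of $\mathsf{AC}$ — exactly the phenomenon the later theorems exploit.)

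The only real work is the set-theoretic bookkeeping of the first paragraph: making sure, with $\mathsf{AC}$ unavailable, that ``image of a well-ordered set is well-orderable'' is applied carefully enough to conclude that each parameter's defining family of cardinals is not just non-empty but actually has a minimum. The witnessing colorings and the level-by-level tree construction are then routine, modulo the minor nuisance of recording the standard small-graph conventions for the edge-distinguishing parameters.
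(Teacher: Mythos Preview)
Your proposal is correct and follows essentially the same approach as the paper: use the well-order on $V_G$ (hence on $E_G\subseteq[V_G]^2$ and on $V_G\cup E_G$) to give an injective coloring witnessing each parameter, and run the BFS level-by-level with the well-order supplying the choice of parent. Your explicit verification that the minimum cardinal actually exists---because every realizable color set, being a surjective image of a well-ordered set, has an aleph cardinality, so the competing cardinals form a well-ordered set of ordinals---is a point the paper leaves implicit, and your treatment is the more careful one.
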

\begin{proof}
If $V_{G}$ is well-orderable, then $E_{G} \subseteq [V_{G}]^{2}$ is also well-orderable. We can therefore color each vertex (or each edge) of $G$ with a unique color. We can use transfinite recursion without invoking any form of choice, and modify the $BFS$-algorithm to find a spanning tree rooted at $v_{0}$.\footnote{It is important to note that the statement ``Any connected graph contains a partial subgraph, which is a tree" is equivalent to $\mathsf{AC}$ (see H\"{o}ft--Howard \cite{HH1973}). }
\end{proof}

\begin{thm}\label{Theorem 3.3}{($\mathsf{ZF}$+ K\H{o}nig’s Lemma)}
{\em Let $G=(V_G,E_G)$ be a connected infinite graph where $\Delta(G)$ is finite. Then the
following holds:

\begin{enumerate}
    \item $D''(G)\leq \lceil \sqrt{\Delta(G)}\rceil$,
    \item $\chi_{o}(G)\leq 2\Delta(G)$,
    \item $\chi''(G)\leq \Delta(G)+10^{26}$,
    \item  $\chi''_{D}(G)\leq \Delta(G)+10^{26}+1$.
\end{enumerate}
}
\end{thm}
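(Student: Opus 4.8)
The plan is to use K\H{o}nig's Lemma exactly once---to make $V_G$ countable, hence well-orderable---and then to derive each of the four bounds by an argument needing no further choice. First I would fix $v_0\in V_G$, write $S_n$ for the set of vertices at distance $n$ from $v_0$ and $B_n=\bigcup_{j\le n}S_j$ for the ball of radius $n$. Since $\Delta(G)$ is finite each $S_n$ is finite, and since $G$ is connected and infinite each $S_n$ is non-empty, so $V_G=\bigcup_{n\in\omega}S_n$ is a union of a countably infinite family of non-empty finite sets; by K\H{o}nig's Lemma in the form of Definition~\ref{Definition 2.5}(3), $V_G$ is countably infinite, in particular well-orderable. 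Fixing an enumeration $\prec$ of $V_G$ also well-orders $E_G\subseteq[V_G]^2$ and $V_G\sqcup E_G$; by Observation~\ref{Observation 3.2} every parameter in the statement exists, $G$ carries a BFS spanning tree $T$ rooted at $v_0$, each $B_n$ is a finite connected graph with $\Delta(B_n)\le\Delta(G)$ and $|B_n|\to\infty$, and the $B_n$ exhaust $G$. (If $\Delta(G)\le 2$ then $G$ is a ray or a double ray and all four bounds hold by direct inspection, so I may assume $\Delta(G)\ge 3$.)

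For the ``local'' bounds (2), (3) and the proper-colouring part of (4) I would run a compactness argument. With $N$ the relevant bound---$N=2\Delta(G)$ for (2), $N=\Delta(G)+10^{26}$ for (3) and for the colouring underlying (4)---put $C=\{1,\dots,N\}$ and topologise $X=\prod_{x\in V_G\sqcup E_G}C$ as a product of finite discrete spaces. The index set is countable, hence well-orderable, and there is a choice function on the finitely many non-empty closed subsets of the single finite space $C$ (``least element''), so $X$ is compact by Loeb's theorem, Fact~\ref{Fact 3.1}(4). Let $K_n\subseteq X$ be the set of $c$ such that $c{\upharpoonright}B_n$ is a proper total colouring of $B_n$ (in cases (3),(4)), respectively a proper colouring of $B_n$ that is odd at every vertex of $B_{n-1}$ (in case (2)). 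Each $K_n$ is clopen (defined by finitely many constraints on finitely many coordinates), the family is decreasing since $B_n\subseteq B_{n+1}$, and each $K_n$ is non-empty: Fact~\ref{Fact 3.1}(3), resp.\ Fact~\ref{Fact 3.1}(2), provides a proper (total) colouring of $B_n$ odd at \emph{every} vertex of $B_n$, hence---since $N_G(v)\subseteq B_n$ for $v\in B_{n-1}$---odd at every $v\in B_{n-1}$ with respect to $N_G(v)$, and the $C_5$-exception in Fact~\ref{Fact 3.1}(2) occurs only when $|B_n|=5$, where a colouring with $5\le 2\Delta(G)$ distinct colours serves instead. By compactness $\bigcap_nK_n\ne\emptyset$, and any member is a colouring of $G$ of the required type with $\le N$ colours, since every vertex, edge, incidence and neighbourhood in the defining conditions lies inside some $B_n$. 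This yields (2), (3), and a proper total colouring $c_0$ of $G$ with $\Delta(G)+10^{26}$ colours for use in (4).

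For the distinguishing bounds (1) and (4), compactness does not apply directly, as being distinguishing is not a closed condition. Call a total colouring $c$ \emph{rigid relative to $B_n$} if the only automorphism of $B_{n+1}$ fixing $B_n$ pointwise and preserving $c{\upharpoonright}B_{n+1}$ is the identity; if $c$ singles out $v_0$ (every colour-preserving automorphism of $G$ fixes $v_0$) and is rigid relative to $B_n$ for all $n$, then $c$ is distinguishing---such an automorphism fixes $v_0$, hence each sphere and each ball, and induction on $n$ via rigidity shows it fixes each ball pointwise. For (1) I would build such a $c$ with $k=\lceil\sqrt{\Delta(G)}\rceil$ colours (note $k\ge 2$) by recursion along the levels of $T$: for $u\in S_{n+1}$ let $w(u)$ be the $\prec$-least neighbour of $u$ in $S_n$; an automorphism of $B_{n+1}$ fixing $B_n$ pointwise can move $u$ only to a vertex with the same $B_n$-neighbourhood, hence the same value $w(\cdot)$, and at most $\Delta(G)\le k^2$ vertices share a given value of $w(\cdot)$, so one can assign them pairwise distinct pairs $(\text{colour of the vertex},\ \text{colour of its edge to }w)\in\{1,\dots,k\}^2$ and colour the remaining new elements by a fixed rule, which makes $B_{n+1}$ rigid relative to $B_n$; since $S_{n+1}$ is finite all these choices are canonical in $\prec$, so the recursion never stalls and needs no further choice. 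Singling out $v_0$ is then arranged as in the finite case \cite[Theorem 2.2]{KPW2016}, affecting only finitely many initial levels. For (4) I would instead take the colouring $c_0$ above and use the one remaining colour $N=\Delta(G)+10^{26}+1$: recolouring to $N$ any set of pairwise non-incident vertices and edges keeps $c_0$ proper, and one recolours such a set, chosen canonically along $T$, that marks enough of the BFS structure to single out $v_0$ and be rigid relative to every $B_n$---this is the mechanism of \cite[Theorem 4.2]{KPW2016}, with the underlying total colouring supplied by \cite{MR1998}.

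The hard part will be the finite combinatorics behind the recursion: showing the new sphere $S_{n+1}$ can always be rigidified relative to $B_n$ within the colour budget---so that ``twin'' vertices of $S_{n+1}$ (equal $B_n$-neighbourhoods and symmetric mutual adjacencies) are separated using only the colours of vertices of $S_{n+1}$ and of edges meeting $S_n\cup S_{n+1}$---and, for (4), producing in a locally finite connected graph a suitable set of pairwise non-incident elements whose recolouring simultaneously pins $v_0$ and kills every automorphism. Making these precise, together with the bookkeeping for singling out $v_0$ and for the $C_5$-exception in Fact~\ref{Fact 3.1}(2), is where one adapts the finite-graph arguments of \cite{KPW2016}, \cite{MR1998} and \cite{CPS2022} to the level-by-level setting; the rest---the set-theoretic reduction and the compactness packaging---is routine.
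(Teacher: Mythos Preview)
Your proposal is correct and follows essentially the same route as the paper: K\H{o}nig's Lemma is used once to make $V_G$ countable, then (2) and (3) go through a Loeb-compactness argument over a product of finite discrete spaces, (1) is obtained by a level-by-level BFS construction adapting \cite[Theorem~2.2]{KPW2016}, and (4) is the ``one extra colour on top of a proper total colouring'' trick. The only cosmetic differences are that the paper first runs the compactness for (2) with $2\Delta(G)+1$ colours and then rules out the $C_5$ obstruction by contradiction (you build it into the definition of $K_n$ directly), and for (4) the paper simply quotes $\chi''_D(G)\le\chi''(G)+1$ from \cite[Theorem~13]{IKPS2017} rather than describing the recolouring mechanism.
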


\begin{proof}
Choose any $r \in V_{G}$. Let $N^{0}(r)=\{r\}$. For each integer $n \geq 1$, define $N^{n}(r) = \{v \in V_{G} : d_{G}(r, v) = n\}$, where ``$d_{G}(r, v) = n$'' means that there are $n$ edges on the shortest path connecting
$r$ and $v$. Each $N^{n}(r)$ is finite since $\Delta(G)$ is finite, and $V_{G} = \bigcup_{n\in \omega}N^{n}(r)$ since $G$ is connected. 
By K\H{o}nig’s Lemma, which is equivalent to $\mathsf{AC^{\omega}_{fin}}$, $V_{G}$ is countably infinite (and thus well-orderable).

(1). Since $G$ is a connected infinite graph, $\Delta(G)\geq 2$. Let $v_0\in V_G$ such that $\deg(v_{0}) = \Delta(G)$. By Observation \ref{Observation 3.2}, there is a $BFS$ spanning tree $T$ of $G$ rooted at $v_0$ in $\mathsf{ZF}$. Assign the color $0$ to $v_0$.  
By K\H{o}nig's Lemma, there is an infinite ray $p= \langle v_i: i\in\omega\rangle$ in $T$ that starts from $v_0$.  
We will construct a total distinguishing coloring with $\lceil \sqrt{\Delta(G)}\rceil$ colors.
For a vertex $v\in V_{G}$, we denote
$$N(v)=N^{1}(v) \quad \text{ and }\quad  S(v) = \{(\{v,u\}, u) : \{v,u\} \in E_{G}\}.
$$

Pick $z\in N(v_0)\setminus \{v_1\}$ (since selecting an element from a set does not involve any form of choice) and assign the color pair $(1, 1)$  to both  $(\{v_0,v_1\}, v_1)$ and $(\{v_0,z\}, z)$. Since $\deg(v_0) = \Delta(G)$, and the pair $(1,1)$ is used twice to color the elements of $S(v_{0})$,
we can color every pair from $ S(v_0)\setminus \{(\{v_0,v_1\}, v_1), (\{v_0,z\}, z)\}$ with  distinct pairs of colors different from $(1, 1)$. Following a similar strategy as in \cite[Theorem 2.2]{KPW2016} due to Kalinowski, Pil\'{s}niak, and Wo\'{z}niak, we will 
color  $G$ such that $v_0$ will be the only vertex colored with $0$, and the pair $(1, 1)$ appears twice in $S(v_0)$.  Hence $v_0$ will be
fixed by every color preserving automorphism, and all vertices in $N(v_0)$
will be fixed, except possibly, $v_1$ and $z$. In order to distinguish $v_1$ and $z$, we color the elements from
$$\{(\{v_1,u\}, u)\in S(v_1):  u \in N^2(v_0)\}\quad \text{ and }\quad \{(\{z,u\}, u) \in  S(z) : \{z,u\} \in E_{T}, u \in N^2(v_0)\}$$
with two distinct sets of color pairs. This is possible since each of these sets contains
at most $\Delta(G)-1$ pairs, while  at least $\Delta(G)$-many distinct color pairs are available. Now suppose that for some $k\in \omega$, every element  from $u\in \bigcup_{i\leq k} N^i(v_0)$ are colored such that they are fixed by every color preserving automorphism. For every $u\in N^k(v_0)$, color $(\{u,z\}, z)$, where $z$ is a child of $u$ in $T$, with distinct color pairs except  $(1, 1)$. 
Hence, all neighbors in $N(u)\cap N^{k+1}(v_0)$ will be fixed. Therefore, the elements of $\bigcup_{i\leq k+1}N^i(v_0)$ are also fixed by every color preserving automorphism. 

Continuing in this fashion by mathematical induction (and noting that the process will not stop at a finite stage), we end up coloring the vertices and edges of $T$. Finally, we color the edges in  $E_{G} \setminus E_{T}$ with $0$. The color assignment of $G$, defined above, is total distinguishing.

(2).  
By Fact \ref{Fact 3.1}(2), every finite subgraph of $G$ is $(2\Delta(G) + 1)$-odd colorable. 
Endow the set $\{1,...,2\Delta(G)+1\}$ with the discrete topology and $T=\{1,...,2\Delta(G)+1\}^{V_{G}}$ with the product topology. Since $V_{G}$ is well-orderable, $\{1,...,2\Delta(G)+1\}\times {V_{G}}$ is well-orderable. By Fact \ref{Fact 3.1}(4), $T$ is compact.
We can use Fact \ref{Fact 3.1}(2) and the compactness of $T$ to prove that $G$ is $(2\Delta(G) + 1)$-odd colorable similarly to the proof of the de Bruijn–Erd\H{o}s theorem, without invoking any form of choice.
If $G$ were not $2\Delta(G)$-odd colorable, then there would exist a finite subgraph $H$ of $G$ with $\chi_{o}(H)= 2\Delta(G) + 1\geq 2\Delta(H) + 1$. 

By the paragraph after Fact \ref{Fact 3.1}, we have $\Delta(H)\leq 2$ and $\chi_{o}(H)= 2\Delta(H) + 1$. 
Moreover, $H$ would have $C_5$ as a connected component, and so $\Delta(H)\neq 1$. Thus, $\Delta(H)=2$ and $\chi_{0}(H)=2\Delta(G) + 1=2\Delta(H) + 1=5$. Consequently, $\Delta(G)=2$. Since $G$ is connected and $C_{5}$ is a connected component of $H$, there exists a vertex $x$ of $C_{5}$ such that the degree of $x$ in $G$ is greater than or equal to 3.
Thus, $G$ would have to contain a vertex of degree greater
than $\Delta(G)$, which is impossible.

(3). This follows from Fact \ref{Fact 3.1}(3), and a straightforward compactness argument as in (2). 

(4). This follows from (3) and the fact that the statement ``$\chi''_{D}(G)$ is at most $\chi''(G)+1$'' holds in $\mathsf{ZF}$ since $V_{G}$ is well-orderable (see \cite[proof of Theorem 13]{IKPS2017}). 
\end{proof}
\section{Total, Neighbor-distinguishing, and distinguishing proper colorings}

In this section, we prove the following.

\begin{thm}\label{Theorem 4.1}{($\mathsf{ZF}$)}{\em The following statements are equivalent:
\begin{enumerate}
\item K\H{o}nig’s Lemma.
\item Every infinite locally finite connected graph has a distinguishing chromatic number.

\item Every infinite locally finite connected graph has a distinguishing chromatic index.

\item Every infinite locally finite connected graph has a total chromatic number.

\item Every infinite locally finite connected graph has a total distinguishing chromatic number.

\item Every infinite locally finite connected graph has a neighbor-distinguishing index.
\end{enumerate}
}
\end{thm}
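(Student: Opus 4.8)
The plan is to prove, for each $k\in\{2,\dots,6\}$, the implications $(1)\Rightarrow(k)$ and $(k)\Rightarrow(1)$. For $(1)\Rightarrow(k)$ I would reuse the opening of the proof of Theorem~\ref{Theorem 3.3}: given an infinite locally finite connected graph $G$, fix $r\in V_G$; the spheres $N^{n}(r)$ are finite and $V_G=\bigcup_{n\in\omega}N^{n}(r)$, so K\H{o}nig's Lemma, being equivalent to $\mathsf{AC^{\omega}_{fin}}$ (Definition~\ref{Definition 2.5}), makes $V_G$ countable, hence well-orderable. By Observation~\ref{Observation 3.2}, $G$ then has a distinguishing chromatic number, a distinguishing chromatic index, a total chromatic number, a total distinguishing chromatic number, and a neighbor-distinguishing index, which settles $(1)\Rightarrow(k)$ for every $k$.

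For the converse I would argue the contrapositive by exhibiting a single graph that simultaneously defeats $(2)$--$(6)$. Suppose K\H{o}nig's Lemma fails; by the equivalence with $\mathsf{PAC^{\omega}_{fin}}$, fix a countably infinite family $\mathcal A=\{A_n:n\in\omega\}$ of nonempty finite sets such that no infinite subfamily of $\mathcal A$ admits a choice function. Discarding the finitely many singletons (infinitely many would already give an infinite subfamily with a choice function) and replacing $A_n$ by $A_n\times\{n\}$, I may assume $|A_n|\ge 2$ and that the $A_n$ are pairwise disjoint; set $B=\bigcup_n A_n$, which is not well-orderable since a well-ordering of $B$ would pick the least element of each $A_n$. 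Now build $G=G(\mathcal A)$: take a one-way ray $s_0s_1s_2\cdots$ as a ``spine''; attach at each $s_n$ a pendant path $L_n$ of length $n+2$ (a ``label'', of length $\ge 2$ and of pairwise distinct lengths, so that the $s_n$ are the only vertices carrying a long pendant path and are pairwise inequivalent); and for each $a\in A_n$ attach a pendant edge $\{s_n,a\}$, so that the members of $A_n$ become pairwise non-adjacent degree-one vertices at $s_n$. Then $G$ is connected and locally finite ($\deg s_n=|A_n|+O(1)<\infty$, all other degrees $\le 3$), and $\mathrm{Aut}(G)\cong\prod_n\mathrm{Sym}(A_n)$, acting by permuting, for each $n$, the leaves at $s_n$.

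The point of the construction is that any coloring witnessing one of $(2)$--$(6)$ for $G$ must be injective on every $A_n$: for the edge-based and total parameters this is automatic, since a proper (total) coloring rainbow-colors the edges at $s_n$ and hence $a\mapsto c(\{s_n,a\})$ is injective on $A_n$; for the distinguishing chromatic number, if $c\restriction A_n$ identified two leaves then the transposition swapping them would be a nontrivial color-preserving automorphism. So if $G$ had the parameter there would be a witness $c\colon V_G\to S$ with $c\restriction A_n$ injective for all $n$; when $S$ is well-orderable this is absurd, as $n\mapsto$ ``the member of $A_n$ receiving the $\le_S$-least color of $c(A_n)$'' is then a choice function on $\mathcal A$. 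The remaining, and main, task is to rule out witnesses into arbitrary (possibly non-well-orderable) color sets, that is, to show outright that for $G$ the class of cardinalities of admissible color sets has no $\preceq$-least member; I would obtain this by adapting Herrlich--Tachtsis's proof that no Russell graph has a chromatic number \cite[Proposition~23]{HT2006}, using that $B$ is an independent set of leaves, that $V_G\setminus B$ is countable and canonically well-orderable, and that a $\preceq$-minimal witness would compress the $A_n$ onto a small color set, again producing a choice function on an infinite subfamily of $\mathcal A$. Granting this, $G(\mathcal A)$ witnesses the failure of each of $(2)$--$(6)$, which yields $(k)\Rightarrow(1)$.

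The easy direction, and the rigidity/injectivity bookkeeping --- the label paths forcing $\mathrm{Aut}(G)=\prod_n\mathrm{Sym}(A_n)$, and the check that the total- and neighbor-distinguishing side conditions are actually satisfiable on $G$ --- I expect to be routine. The genuine obstacle is the last step: upgrading ``a witnessing coloring yields a choice function'' to ``no admissible color cardinality is $\preceq$-minimal'', which is exactly where the Herrlich--Tachtsis-style argument has to do its work and which must be carried out uniformly for all five parameters.
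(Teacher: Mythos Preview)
Your plan is essentially the paper's: the forward direction via Observation~\ref{Observation 3.2} is identical, and for the converse both you and the paper take the contrapositive, build a single locally finite connected graph on a spine decorated with the $A_n$ as pendant leaves, check that automorphisms fix the spine and act as $\prod_n\mathrm{Sym}(A_n)$ on the leaves, and then run a Herrlich--Tachtsis argument to show no admissible color set is $\preceq$-minimal. The only cosmetic difference is the rigidification gadget: the paper prepends two extra vertices $t'',t'$ to the ray (so that $t''$ is the unique degree-$1$ vertex with a degree-$2$ neighbor, whence the whole spine is fixed by induction; Claim~\ref{claim 4.2}), whereas you hang pendant paths of pairwise distinct lengths at each spine vertex. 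Both devices work, and the paper's is a bit more economical.

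One caution about the step you defer. Your one-line sketch (``a $\preceq$-minimal witness would compress the $A_n$ onto a small color set, again producing a choice function on an infinite subfamily'') misstates the mechanism. The argument the paper actually carries out (Claims~\ref{claim 4.4}--\ref{claim 4.5} for vertices, Lemma~\ref{lemma 4.6} and \S\S4.4--4.7 for the edge/total parameters) is this: for \emph{any} witness $f$, the image $f[\bigcup_n A_n]$ (respectively $f$ on the leaf-edges) is infinite --- otherwise $\bigcup_n A_n$ would be finite --- and Dedekind-finite --- a countably infinite subset of the image would yield a partial choice function; moreover each color class meets only finitely many $A_n$, so the finitely many occurrences of any fixed color $c_0$ can be re-routed to a color not used on those $A_n$'s. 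One then recolors the countable, explicitly well-ordered remainder $V_G\setminus B$ (your spine and label-paths) with the freed-up colors, obtaining a new witness whose color set is a \emph{proper} subset of a Dedekind-finite set, hence of strictly smaller cardinality. The contradiction is purely cardinal-arithmetic; no choice function is produced at that final stage. When you write it out uniformly for all five parameters, that is the shape the argument should take.
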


Let $G=(V_{G}, E_{G})$ be an infinite, locally finite, connected graph. Following the arguments in the first paragraph of the proof of Theorem \ref{Theorem 3.3}, $V_{G}$ is well-orderable by K\H{o}nig’s Lemma.
Thus, (1) implies (2)-(6) by Observation \ref{Observation 3.2}. 
We show that each of the conditions
(2)-(6) implies (1).
Since $\mathsf{AC_{fin}^{\omega}}$ is equivalent to its partial version $\mathsf{PAC_{fin}^{\omega}}$  (cf. Definition \ref{Definition 2.5}), it suffices to show that each of (2)-(6) implies $\mathsf{PAC_{fin}^{\omega}}$.
Let $\mathcal{A}=\{A_{n}:n\in \omega\}$ be a countably infinite set of non-empty finite sets without a partial choice function. Without loss of generality, we assume that $\mathcal{A}$ is disjoint. 
It is enough to show that there exists an infinite locally finite connected graph $G$ such that $G$ does not have the coloring parameters mentioned in (2)-(6).

\subsection{Construction of the graph}

Pick a countably infinite sequence $T=\{t_{n}:n\in \omega\}$ disjoint from $A=\bigcup_{i\in\omega}A_{i}$. Pick $t', t''\not\in (\bigcup_{n\in\omega}A_n)\cup T$ such that $t''\neq t'$ and consider the following infinite locally finite connected graph $G=(V_{G}, E_{G})$ (see Figure \ref{Figure 1}):

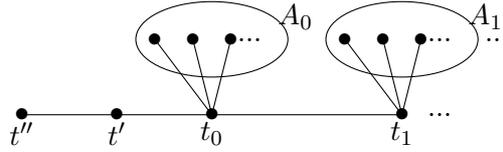
\begin{figure}[!ht]
\centering
\begin{minipage}{\textwidth}
\centering
\begin{tikzpicture}[scale=0.5]
\draw (-2.5, 1) ellipse (2 and 1);
\draw (-4,1) node {$\bullet$};
\draw (-3,1) node {$\bullet$};
\draw (-2,1) node {$\bullet$};
\draw (-1.5,1) node {...};
\draw (-0.3,1.6) node {$A_{0}$};
\draw (-2.5,-1) node {$\bullet$};
\draw (-2.5,-1.5) node {$t_{0}$};
\draw (-4,1) -- (-2.5,-1);
\draw (-3,1) -- (-2.5,-1);
\draw (-2,1) -- (-2.5,-1);
\draw (2.5, 1) ellipse (2 and 1);
\draw (1,1) node {$\bullet$};
\draw (2,1) node {$\bullet$};
\draw (3,1) node {$\bullet$};
\draw (3.5,1) node {...};
\draw (4.7,1.6) node {$A_{1}$};
\draw (2.5,-1) node {$\bullet$};
\draw (2.5,-1.5) node {$t_{1}$};
\draw (3,1) -- (2.5,-1);
\draw (2,1) -- (2.5,-1);
\draw (1,1) -- (2.5,-1);
\draw (-2.5,-1) -- (2.5,-1);
\draw (3.5,-1) node {...};
\draw (5,1) node {...};
\draw (-5,-1) node {$\bullet$};
\draw (-5,-1.5) node {$t'$};
\draw (-5,-1) -- (-2.5,-1);
\draw (-7.5,-1) node {$\bullet$};
\draw (-7.5,-1.5) node {$t''$};
\draw (-7.5,-1) -- (-5,-1);
\end{tikzpicture}
\end{minipage}
\caption{\em Graph $G$, an infinite locally finite connected graph.}
\label{Figure 1}
\end{figure}

\begin{alignat*}{3}
V_{G} &:= (\bigcup_{n\in\omega}A_n)\cup T \cup \{t',t''\},     \\
E_{G} &:= \{t'', t'\} \cup \{t', t_0\} \cup \big\{ \{t_n,t_{n+1}\}: n\in\omega\big\} &&\cup\big\{ \{t_n,x\}: n\in\omega,x\in A_n \big\}. \\
\end{alignat*}

\subsection{{\em G} has no distinguishing chromatic number}
In this subsection, we show that condition (2) implies $\mathsf{PAC_{fin}^{\omega}}$.
The following two claims were proved in \cite{BMG2024}.

\begin{claim}{(\cite[Claim 4.7]{BMG2024})}\label{claim 4.2}
{\em 
 Any automorphism of $G$ fixes the vertices $t', t''$, and $t_m$ for each non-negative integer $m$.
}
\end{claim}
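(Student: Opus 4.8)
The plan is to exploit the rigidity forced by the degrees and the unique combinatorial "anchors" of $G$, reading off from Figure~\ref{Figure 1} that $t''$ is the only vertex of degree $1$, that $t'$ is the only vertex of degree $2$ whose neighbourhood contains a degree-$1$ vertex, and that $t_0$ is distinguished among the $t_n$ as the unique vertex adjacent to $t'$. First I would observe that any automorphism $\sigma$ of $G$ must preserve degrees (this is a first-order invariant, so it follows from the isomorphism-preservation statement recorded in Definition~\ref{Definition 2.6}, and in any case is immediate). Since $\deg_G(t'')=1$ and every other vertex has degree $\geq 2$ (each $t_n$ has degree at least $3$ as it is joined to $t_{n\pm 1}$ and to the non-empty set $A_n$; each $x\in A_n$ has degree $1$ only if $|A_n|$... no, $x$ has degree exactly $1$ as well — wait, I must be more careful here).

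Let me re-examine: a vertex $x\in A_n$ is adjacent only to $t_n$, so $\deg_G(x)=1$, which coincides with $\deg_G(t'')=1$. Hence degree alone does not pin down $t''$, and the argument must use one more step. The key point is that $t''$ is the \emph{unique} degree-$1$ vertex whose (single) neighbour has degree $2$: indeed the neighbour of $t''$ is $t'$, and $\deg_G(t')=2$ (adjacent to $t''$ and $t_0$), whereas the neighbour of any $x\in A_n$ is $t_n$, which has degree $\geq 3$. Since "$v$ has degree $1$ and the unique neighbour of $v$ has degree $2$" is expressible by a first-order $\mathcal L$-formula, it is preserved by $\sigma$, so $\sigma(t'')=t''$. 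Then $\sigma(t')=\sigma$ applied to the unique neighbour of $t''$, which must be the unique neighbour of $\sigma(t'')=t''$, namely $t'$ itself; so $\sigma(t')=t'$. Next, $t_0$ is the unique neighbour of $t'$ other than $t''$, so $\sigma(t_0)=t_0$ by the same reasoning. Finally I would induct on $m$: assuming $\sigma(t_m)=t_m$, note that $t_{m+1}$ is characterised among the neighbours of $t_m$ as the unique one of degree $\geq 3$ that is not already among $\{t_{m-1}\}\cup A_m$ — more cleanly, $t_{m+1}$ is the unique neighbour of $t_m$ having degree $\geq 3$ other than $t_{m-1}$ (for $m\geq 1$; for $m=0$ the neighbours of $t_0$ of degree $\geq 3$ are just $\{t_1\}$ since $t'$ has degree $2$ and the elements of $A_0$ have degree $1$). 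Since $\sigma$ fixes $t_m$ (and, inductively, $t_{m-1}$), it must map this uniquely-determined neighbour to itself, giving $\sigma(t_{m+1})=t_{m+1}$.

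\textbf{The main obstacle.} The only subtlety is the coincidence of degrees between $t''$ and the leaves $x\in A_n$; the resolution is to climb one level up and use the degree of the neighbour, which works precisely because $\deg_G(t')=2$ is strictly smaller than $\deg_G(t_n)\geq 3$ for every $n$ — this in turn uses that each $A_n$ is non-empty. A second, minor point is the induction step for the ray: one must verify that for each $m$ the vertex $t_{m+1}$ really is singled out among $N_G(t_m)$ by a property $\sigma$ respects; phrasing it as "the neighbour of degree $\geq 3$ distinct from the already-fixed vertex $t_{m-1}$" makes this transparent, and all such properties are first-order in $\mathcal L$ with the fixed vertices as parameters, hence preserved by $\sigma$ via the standard fact cited in Definition~\ref{Definition 2.6}. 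I do not expect any set-theoretic difficulty here: the claim is a purely combinatorial statement about a single explicitly-given graph, so it is provable in plain $\mathsf{ZF}$ (indeed without any choice), which is exactly what is needed for its later use.
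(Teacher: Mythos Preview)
Your proposal is correct and follows essentially the same route as the paper: both arguments pin down $t''$ as the unique degree-$1$ vertex whose neighbour has degree $2$, deduce that $t'$ is fixed, and then induct along the spine $t_0,t_1,\dots$ using that the backbone neighbours of $t_m$ are distinguished from the leaves in $A_m$ by having degree $\geq 2$. The only cosmetic differences are that the paper characterises $t'$ directly as the unique degree-$2$ vertex of $G$ (rather than as the unique neighbour of the already-fixed $t''$), and phrases the induction step as ``$\tau(t_m)$ is a neighbour of $t_{m-1}$ of degree $\geq 2$, hence $t_{m-2}$ or $t_m$, and $t_{m-2}$ is already taken''.
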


\begin{claim}{(\cite[Claim 4.8]{BMG2024})}\label{claim 4.3}
{\em Fix $p\in \omega$ and $x\in A_p$. Then $Orb_{Aut(G)}(x)=\{g(x): g\in Aut(G)\}=A_p$.} 
\end{claim}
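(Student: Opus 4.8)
The plan is to first record the one structural fact we need about $A_p$: every $x\in A_p$ has $\deg(x)=1$ in $G$, and its unique neighbour is $t_p$. This is immediate from the definition of $E_G$, since the only edges incident to a vertex of $A_p$ are those of the form $\{t_p,x\}$ with $x\in A_p$. With this in hand the inclusion $Orb_{Aut(G)}(x)\subseteq A_p$ follows quickly. Given any $g\in Aut(G)$, Claim~\ref{claim 4.2} gives $g(t_p)=t_p$, so $g(x)$ is a neighbour of $t_p$; and since automorphisms preserve degree, $g(x)$ has degree $1$. So it suffices to check that the only degree-$1$ neighbours of $t_p$ are the vertices of $A_p$. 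The neighbours of $t_p$ are the elements of $A_p$ together with $t_{p+1}$ and (either $t_{p-1}$ if $p\ge 1$, or $t'$ if $p=0$); each of these last vertices has degree at least $2$, so $g(x)\in A_p$.

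For the reverse inclusion $A_p\subseteq Orb_{Aut(G)}(x)$, I would exhibit, for each $y\in A_p$, an explicit automorphism sending $x$ to $y$: namely the transposition $\sigma=(x\ y)$ that swaps $x$ and $y$ and fixes every other vertex of $G$. Checking that $\sigma$ is an automorphism is routine: the only edges moved by $\sigma$ are $\{t_p,x\}$ and $\{t_p,y\}$, which $\sigma$ interchanges, while all other edges are fixed; hence $\sigma$ preserves $E_G$, and $\sigma(x)=y$. Thus every $y\in A_p$ lies in the orbit of $x$, which together with the previous paragraph gives $Orb_{Aut(G)}(x)=A_p$. (In fact a bit more is true and could be noted in passing: since all the $t',t'',t_n$ are fixed by every automorphism and the sets $A_n$ are pairwise disjoint independent sets of leaves, $Aut(G)\cong\prod_{n\in\omega}\mathrm{Sym}(A_n)$; but only the transpositions are needed here.)

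There is no real obstacle to this argument; the only small point to watch is the boundary case $p=0$, where the neighbour of $t_0$ on the ray side is $t'$ rather than a vertex $t_{-1}$ — but $t'$ is adjacent to both $t''$ and $t_0$, so $\deg(t')=2$ and the degree argument still excludes it from the orbit.
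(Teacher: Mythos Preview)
Your proof is correct and follows essentially the same approach as the paper: both use Claim~\ref{claim 4.2} to fix $t_p$ and then characterise $A_p$ among the neighbours of $t_p$ (the paper via ``distance $1$ from $t_p$ within $\bigcup_n A_n$'', you via ``degree-$1$ neighbours of $t_p$''). Your write-up is more detailed than the paper's one-line proof, in particular you make the reverse inclusion explicit by exhibiting the transposition $(x\ y)$, which the paper leaves implicit.
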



\begin{claim}\label{claim 4.4}
{\em If $f: V_{G} \rightarrow C$ is a distinguishing proper vertex coloring of $G$, then the following holds:
\begin{enumerate}
    \item If $x,y\in A_{i}$ such that $x\neq y$, then $f(x)\neq f(y)$.
    \item For each $c \in C$, the set $M_{c}=f^{-1}(c) \cap \bigcup_{i\in\omega} A_{i}$ must be finite.
    \item $f[\bigcup_{n\in \omega}A_{n}]$ is infinite.
    \item $f[\bigcup_{n\in \omega}A_{n}]$ is Dedekind-finite.
    \item For each natural number $m$, if  $c_0, ...,c_m\in f[\bigcup_{n\in \omega}A_{n}]$, then there is a proper vertex coloring $h: \bigcup_{n\in \omega}A_{n} \to f[\bigcup_{n\in \omega}A_{n}]\setminus\{c_0, ..., c_m\}$ such that $h(x) = f(x)$ whenever $f(x)\neq c_i$.\footnote{The crux of this claim is inspired by the arguments of Herrlich–-Tachtsis \cite[Proposition 23]{HT2006} where they proved that no Russell graph has a chromatic number in $\mathsf{ZF}$.}
\end{enumerate}
}
\end{claim}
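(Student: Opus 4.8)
The plan is to prove the five parts in the stated order, each building on its predecessors, using the twin structure inside the bags $A_i$ together with the standing assumption that $\mathcal{A}$ has no partial choice function. Part~(1) I would do by contradiction: if $x\neq y$ both lie in some $A_i$ and $f(x)=f(y)$, then since the only neighbour of $x$ (and of $y$) in $G$ is $t_i$, the transposition swapping $x$ and $y$ and fixing every other vertex is an automorphism of $G$, and it is colour-preserving; this contradicts $f$ being a distinguishing colouring, so $f$ is injective on each $A_i$.

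Parts~(2) and~(3) then follow by exploiting the choice-free hypothesis on $\mathcal{A}$. From~(1), each $A_i$ contributes at most one vertex to $M_c$; an infinite $M_c$ could not be contained in a finite union of the finite sets $A_i$, so it would meet infinitely many $A_i$ and the assignment $A_i\mapsto$ (the unique point of $M_c\cap A_i$) would be a partial choice function on an infinite subfamily of $\mathcal{A}$ --- impossible. Hence every $M_c$ is finite, which is~(2). For~(3), note $\bigcup_n A_n$ is infinite because $A_0\cup\dots\cup A_N$ has at least $N+1$ elements for every $N$; if $f[\bigcup_n A_n]$ were finite, then $\bigcup_n A_n$ would be a finite union of the finite sets $M_c$ and hence finite, a contradiction.

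The one place I expect real care to be needed is~(4). Assume towards a contradiction that $\omega\preceq f[\bigcup_n A_n]$ and fix an injective sequence $\langle c_n:n\in\omega\rangle$ of colours from the image. The sets $M_{c_n}$ are pairwise disjoint, non-empty and finite (by~(2)); put $I_n=\{i:M_{c_n}\cap A_i\neq\emptyset\}$ and $J=\bigcup_n I_n$. The tempting move --- read a partial choice function straight off $n\mapsto M_{c_n}$ --- does not work, because many $M_{c_n}$ may all land in the same bag $A_i$. The fix is to observe that $\bigcup_n M_{c_n}\subseteq\bigcup_{i\in J}A_i$, so a finite $J$ would force $\bigcup_n M_{c_n}$ to be finite, which is false; hence $J$ is infinite. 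Then for $i\in J$ set $n(i)=\min\{n:i\in I_n\}$ and let $F(A_i)$ be the unique element of $M_{c_{n(i)}}\cap A_i$ (unique by~(1)): this $F$ is a choice function on the infinite subfamily $\{A_i:i\in J\}$ of $\mathcal{A}$, the desired contradiction. All selections use only the fixed sequence and the $\min$ operator on $\omega$, so no choice is invoked, and we conclude $f[\bigcup_n A_n]$ is Dedekind-finite.

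Finally,~(5) is routine: $S:=\bigcup_{i\le m}M_{c_i}$ is finite by~(2), and by~(3) the set $f[\bigcup_n A_n]\setminus\{c_0,\dots,c_m\}$ is still infinite, in particular non-empty, so we fix a single colour $d$ in it and define $h(x)=f(x)$ when $f(x)\notin\{c_0,\dots,c_m\}$ and $h(x)=d$ otherwise. This $h$ is a definable function into $f[\bigcup_n A_n]\setminus\{c_0,\dots,c_m\}$ agreeing with $f$ off $\{c_0,\dots,c_m\}$, and no choice is needed to produce it (if one also wants $h$ to be compatible with the colours $f(t_n)$, one removes in addition the finitely many such colours with $A_n\cap S\neq\emptyset$, still leaving infinitely many options by~(3)).
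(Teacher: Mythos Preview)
Your argument is correct, and parts (1)--(4) are essentially the paper's proof (in (4) the paper selects $d_i\in M_{c_i}\cap A_{n}$ for the least admissible $n$ and appeals to the Dedekind-finiteness of $\bigcup_n A_n$, whereas you go through the index set $J$ directly to a partial choice function --- an equivalent manoeuvre). The genuine difference is in (5): you recolour every vertex in $\bigcup_{j\le m}M_{c_j}$ with a \emph{single} replacement colour $d$, which does suffice for the claim as literally stated. The paper instead chooses the replacement $b_0$ for $c_0$ from $f[\bigcup_n A_n]\setminus\bigcup_{m\in\mathrm{Index}(M_{c_0})}f[A_m]$, i.e.\ avoiding every colour already present in a bag touched by $c_0$, and then iterates for $c_1,\dots,c_m$. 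The payoff is that the paper's $h$ remains injective on each $A_i$, a property not asserted in Claim~4.4 but essential in Claim~4.5, where the extension of $h$ to all of $V_G$ must be \emph{distinguishing}. With your single-colour replacement, if some $A_i$ contains $x,y$ with $f(x)=c_0$ and $f(y)=c_1$, then $h(x)=h(y)=d$ and the transposition swapping $x$ and $y$ preserves $h$; so your construction proves the stated claim but would not serve the downstream application without the paper's refinement.
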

\begin{proof}
(1). This follows by Claim \ref{claim 4.3}. 

(2). If $M_{c}$ is infinite, then it will generate a partial choice function for $\mathcal{A}$ by (1).

(3). If $f[\bigcup_{n\in \omega}A_{n}]$ is finite, then $\bigcup_{n\in \omega}A_{n}=\bigcup_{c\in f[\bigcup_{n\in \omega}A_{n}]} M_{c}$
is finite by (2), since the finite union of finite sets is finite in $\mathsf{ZF}$. Thus, we obtain a contradiction. 

(4). We note that $\bigcup_{n\in \omega}A_{n}$ is Dedekind-finite since $\mathcal{A}$ has no partial choice function. For the sake of contradiction, assume that $C=\{c_{i}:i\in \omega\}$ is a countably infinite subset of $f[\bigcup_{n\in \omega}A_{n}]$. 
Fix a well-ordering of $\mathcal{A}$ (since $\mathcal{A}$ is countable, and hence well-orderable).
Define $d_{i}$ to be the {\em unique} element of $M_{c_{i}} \cap A_{n}$ 
where $n$ is the least element of $\{m\in\omega: M_{c_{i}} \cap A_{m}\neq \emptyset\}$ with respect to the well-ordering of $\mathcal{A}$.
Such an $n$ exists since $c_{i} \in f[\bigcup_{n<\omega} A_{n}]$ and
$M_{c_{i}} \cap A_{n}$ has a single element by the property mentioned in (1).
Then $\{d_{i}:i\in\omega\}$ is a countably infinite subset of $\bigcup_{n\in \omega}A_{n}$ which contradicts the fact that $\bigcup_{n\in \omega}A_{n}$ is Dedekind-finite.

(5). Fix some $c_{0} \in f[\bigcup_{n\in \omega}A_{n}]$.
Then $Index(M_{c_{0}}) = \{n \in \omega : M_{c_{0}} \cap A_{n} \neq \emptyset\}$ is finite. By (3), there exists some
$b_{0} \in (f[\bigcup_{n\in \omega}A_{n}]\backslash \bigcup_{m\in Index(M_{c_{0}})} f[A_{m}])$ (see Figure \ref{Figure 2}).

\begin{figure}[!ht]
\centering
\begin{minipage}{\textwidth}
\centering
\begin{tikzpicture}[scale=0.7]
\draw (-2.5, 1) ellipse (1 and 1.5);
\draw (-2,0) node {$\bullet$};
\draw (-2.5,0.5) node {$\bullet$};
\draw (-3,1) node {$\bullet$};
\draw (-2.5,1.5) node {$...$};
\draw (-1,2) node {$A_{0}$};
\draw (-2.5,-1) node {$\bullet$};
\draw (-2.5,-1.5) node {$t_{0}$};
\draw (-2,0) -- (-2.5,-1);
\draw (-2.5,0.5) -- (-2.5,-1);
\draw (-3,1) -- (-2.5,-1);

\draw (2.5, 1) ellipse (1 and 1.5);
\draw (3,0) node {$\bullet$};
\draw (2.5,0.5) node {$\bullet$};
\draw (2,1) node {$\bullet$};
\draw (2.5,1.5) node {...};
\draw (4,2) node {$A_{1}$};
\draw (2.5,-1) node {$\bullet$};
\draw (2.5,-1.5) node {$t_{1}$};
\draw (3,0) -- (2.5,-1);
\draw (2.5,0.5) -- (2.5,-1);
\draw (2,1) -- (2.5,-1);

\draw (-2.5,-1) -- (2.5,-1);
\draw (3.5,-1) node {...};
\draw (5,1) node {...};

\draw (6.5, 1) ellipse (1 and 1.5);
\draw (7,0) node {$\bullet$};
\draw (6.5,0.5) node {$\bullet$};
\draw (6,1) node {$\bullet$};
\draw (2.5,1.5) node {...};
\draw (8,2) node {$A_{i}$};
\draw (6.5,-1) node {$\bullet$};
\draw (6.5,-1.5) node {$t_{i}$};
\draw (7,0) -- (6.5,-1);
\draw (6.5,0.5) -- (6.5,-1);
\draw (6,1) -- (6.5,-1);

\draw (8.5,-1) node {...};
\draw (10,1) node {...};


\draw (-10,-4) rectangle (10,-1.8);
\draw (10,-1.4) node {$C$};

\draw [dashed](-9,-3.8) rectangle (9,-2);
\draw (7.2,-2.6) node {$f[\bigcup_{n\in \omega}A_{n}]$};

\draw (-7.5,-2.6) node {$\bullet c_{0}$};
\draw (-8.5,-3.6) rectangle (2,-2.2);
\draw (-1.2,-2.8) node {$\bigcup_{m\in Index(M_{c_{0}})} f[A_{m}]$};
\draw (3,-2.6) node {$\bullet b_{0}$};

\draw [dashed](-2.2,-0.2) rectangle (8,0.3);
\draw (8.7,0.1) node {$M_{c_{0}}$};

\draw[-triangle 60] (-4,0.5) [bend right=60] to (-5,-2.1);
\draw (-5.7,-0.5) node {$f$};
\draw (-4,-0.7) rectangle (9.5,3);
\draw (-6.5,2.5) node {$\bigcup_{m\in Index(M_{c_{0}})} A_{m}$};
\draw (-3.5,-1) node {$\bullet$};
\draw (-3.5,-1.5) node {$t'$};
\draw (-3.5,-1) -- (-2.5,-1);
\draw (-4.5,-1) node {$\bullet$};
\draw (-4.5,-1.5) node {$t''$};
\draw (-4.5,-1) -- (-3.5,-1);
\end{tikzpicture}
\end{minipage}
\caption{\em There exists a color
$b_{0}$ from the set $(f[\bigcup_{n\in \omega}A_{n}]\backslash \bigcup_{m\in Index(M_{c_{0}})} f[A_{m}])$.}
\label{Figure 2}
\end{figure}
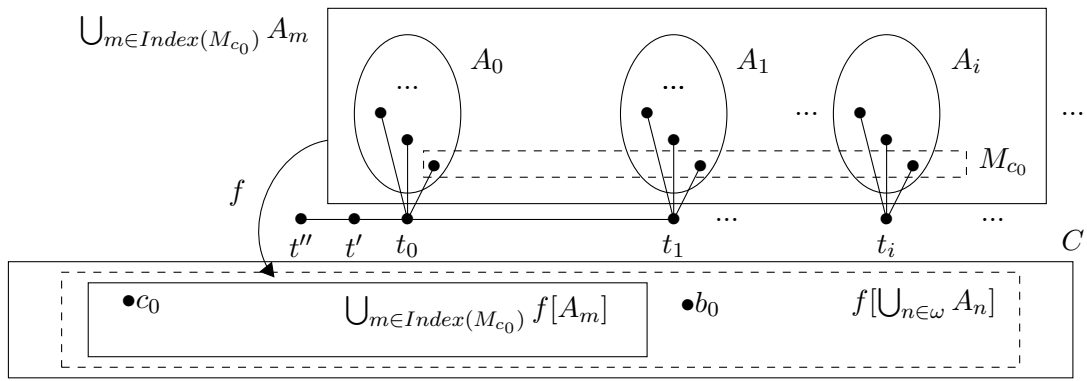

Define a vertex coloring $g: \bigcup_{n\in \omega}A_{n} \rightarrow (f[\bigcup_{n\in \omega}A_{n}]\backslash \{c_{0}\})$ as follows:

\begin{center}
$g(x) =
\begin{cases} 
f(x) & \text{if}\, f(x)\neq c_{0}, \\

b_{0} & \text{otherwise}.
\end{cases}$
\end{center}

Similar to the definition of $g$, if $c_{0},...,c_{m}\in f[\bigcup_{n\in \omega}A_{n}]$, then one can define a vertex coloring 
$h: \bigcup_{n\in \omega}A_{n}\rightarrow (f[\bigcup_{n\in \omega}A_{n}]\backslash \{ c_{0},...,c_{m}\})$
satisfying the properties of the assertion.
\end{proof}
\begin{claim}\label{claim 4.5}
{\em $G$ has no distinguishing chromatic number.}
\end{claim}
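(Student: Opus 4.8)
The plan is to argue by contradiction: assume $\chi_D(G)$ exists, say $\chi_D(G)=|C|$, witnessed by a distinguishing proper vertex coloring $f:V_G\to C$; replacing $C$ by $f[V_G]$ and invoking minimality of $|C|$, we may assume $f$ is onto, so $C=B\cup R$ where $B:=f[\bigcup_n A_n]$ and $R:=f[T\cup\{t',t''\}]$. By Claim~\ref{claim 4.4}(3),(4) the set $B$ is infinite and Dedekind-finite; since $T\cup\{t',t''\}$ is countably infinite, $R$ is countable, and therefore $B\cap R$ is finite (were it infinite it would, as an infinite subset of the countable set $R$, be countably infinite and hence Dedekind-infinite, contradicting $B\cap R\subseteq B$). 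The first thing I would record is the characterization, immediate from Claims~\ref{claim 4.2} and~\ref{claim 4.3}: \emph{a proper coloring of $G$ is distinguishing if and only if it is injective on each $A_p$}. Indeed, every automorphism of $G$ fixes $t',t''$ and every $t_n$ and restricts to a permutation of each $A_p$, so a nontrivial automorphism moves some vertex within some $A_p$, which injectivity detects; conversely, if $f(x)=f(y)$ for distinct $x,y\in A_p$, then the transposition of $x$ and $y$ (a $\mathsf{ZF}$-definable automorphism, since $x,y$ have $t_p$ as their unique neighbor) preserves $f$.

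The contradiction is produced by building from $f$ a distinguishing proper coloring with strictly fewer colors, and the construction splits on whether $R$ is finite. If $R$ is finite, then $C=B\cup R$ is Dedekind-finite, so deleting a single color strictly lowers cardinality. Pick $c_0\in B\setminus R$ (nonempty since $B$ is infinite) and apply the recoloring of Claim~\ref{claim 4.4}(5) with $m=0$ on $\bigcup_n A_n$, taking the replacement color $b_0$ inside the infinite set $B$ but outside the finitely many forbidden values $\{c_0\}\cup\bigcup_{n\in Index(M_{c_0})}f[A_n]\cup\{f(t_n):n\in Index(M_{c_0})\}$; this last proviso is exactly what keeps the new coloring proper across the edges $\{t_n,x\}$, $x\in A_n$. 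Leaving $f$ untouched on the backbone $T\cup\{t',t''\}$, the result is a proper coloring that is still injective on each $A_p$, hence distinguishing, with color set contained in $C\setminus\{c_0\}$ — contradicting minimality of $|C|$. If instead $R$ is infinite, then $R$, and hence $C$, is Dedekind-infinite, so removing boundedly many colors is worthless; here the point is that the backbone is essentially a ray, hence $2$-colorable, so I would recolor \emph{only} the backbone, alternating two colors $\ast_0,\ast_1$ not occurring in $C$ (say the ordered pairs $(C,0)$ and $(C,1)$, which have rank exceeding that of $C$ and so are not members of $C$) along $t'',t',t_0,t_1,\dots$, leaving $f$ untouched on $\bigcup_n A_n$. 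Properness is automatic because $\ast_0,\ast_1\notin B\supseteq\bigcup_n f[A_n]$, and distinguishing-ness again needs only injectivity on each $A_p$; the new color set $B\cup\{\ast_0,\ast_1\}$ is Dedekind-finite and embeds into the Dedekind-infinite set $C$, hence has strictly smaller cardinality — again a contradiction. Since both alternatives are impossible, $G$ has no distinguishing chromatic number.

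I expect the Dedekind-infinite case to be the main obstacle. When $C$ is Dedekind-infinite one cannot shrink it by deleting finitely many colors, and one cannot color the $A_p$'s more economically either, since any distinguishing proper coloring must inject every $A_p$ while $\mathcal{A}$ has no partial choice function — so every such coloring is forced to spend infinitely (but Dedekind-finitely) many colors on $\bigcup_n A_n$. The way out is to observe that in the Dedekind-infinite case the surplus colors must all sit on the backbone, which is cheap precisely because it is a ray: introducing two fresh colors disjoint from $B$ collapses $R$ entirely. Throughout, one must check that every recoloring is genuinely choice-free — each step either is outright definable from $f$ and $G$ or picks a single element from a single nonempty set — so that the colorings produced really exist in $\mathsf{ZF}$.
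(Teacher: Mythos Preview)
Your argument is correct, but the case split on whether $R=f[T\cup\{t',t''\}]$ is finite is avoidable, and the paper's proof sidesteps it neatly. The paper frees up \emph{three} colors $c_0,c_1,c_2\in B=f[\bigcup_n A_n]$ via Claim~\ref{claim 4.4}(5), then recolors the backbone $t'',t',t_0,t_1,\dots$ by alternating $c_0,c_1$ and leaves the modified $h$ on $\bigcup_n A_n$. The resulting coloring is proper (the backbone uses only $c_0,c_1$, the $A_n$'s use only $B\setminus\{c_0,c_1,c_2\}$) and distinguishing (Claim~\ref{claim 4.2} plus injectivity on each $A_p$), and its color set lies inside $B\setminus\{c_2\}$. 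Since $B$ is Dedekind-finite, $B\setminus\{c_2\}\prec B\preceq C$ in one stroke, regardless of whether $R$ is finite or infinite.

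What your split buys is conceptual clarity about \emph{where} the surplus colors live: you diagnose that either $C$ is already Dedekind-finite (so any single deletion wins) or the waste is entirely on the backbone (so collapsing $R$ to two colors wins). That is a sound decomposition, and your Case~2 cardinality comparison ($B\cup\{\ast_0,\ast_1\}$ Dedekind-finite, $C$ Dedekind-infinite, with an embedding via two points of the infinite set $R\setminus B$) is correct. But the paper's trick of recycling freed-up colors from $B$ to paint the backbone is more economical: it forces the new color set into $B$ unconditionally, so Dedekind-finiteness of $B$ alone delivers the contradiction and the Dedekind-infinite case never needs separate treatment. Your extra proviso in Case~1 (choosing $b_0$ to also avoid $\{f(t_n):n\in\mathrm{Index}(M_{c_0})\}$) is a nice explicit check of properness that the paper leaves implicit; in the paper's version this check is automatic because the backbone is recolored with $c_0,c_1$, which have been removed from $h[\bigcup_n A_n]$.
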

\begin{proof}
Assume that $G$ has a distinguishing chromatic number (say $\kappa$). Let  $f: V_{G} \to C$ be a distinguishing proper vertex coloring with $|C|=\kappa$. 
For some $c_{0}, c_{1},c_{2}\in f[\bigcup_{n\in \omega}A_{n}]$, we can define a vertex coloring $h: \bigcup_{n\in \omega}A_{n}\rightarrow (f[\bigcup_{n\in \omega}A_{n}]\backslash \{ c_{0},c_{1},c_{2}\})$ as in Claim \ref{claim 4.4}(5).
Let $h(t'')=c_{0}$, $h(t')=c_{1}$, $h(t_{2n})=c_{0}$, and $h(t_{2n+1})=c_{1}$ for all $n\in\omega$. 
By Claim \ref{claim 4.2}, $h:V_{G} \rightarrow (f[\bigcup_{n\in \omega}A_{n}]\backslash \{c_{2}\})$ is a distinguishing proper vertex coloring of $G$. 
Define $C_{1}=f[\bigcup_{n\in\omega} A_{n}]\backslash \{c_{2}\}$. By Claim \ref{claim 4.4}(4),
$C_{1} \prec f[\bigcup_{n\in\omega} A_{n}] \preceq C$ which contradicts the
fact that $\chi_{D}(G)=\kappa$. 
\end{proof}

Thus, condition (2) implies $\mathsf{PAC_{fin}^{\omega}}$. This shows that condition (2) in Theorem \ref{Theorem 4.1} is equivalent to K\H{o}nig’s Lemma.
\subsection{A Lemma on edge colorings} By Claim \ref{claim 4.2}, every automorphism fixes the edges $\{t'',t'\}$, $\{t', t_0\}$, and $\{t_n, t_{n+1}\}$ for each $n\in \omega$. The following lemma will be applied in subsections 4.4, 4.5, 4.6, and 4.7. Throughout the section, let 
$B = \{\{t_n,x\}: n\in \omega,  x\in A_n\}$.

\begin{lem}\label{lemma 4.6}
{\em If $f$ is a distinguishing proper edge coloring (resp. neighbor-distinguishing edge coloring, total coloring) of $G$, then the following holds:

\begin{enumerate}
    \item For each $n\in \omega$ and $x,y\in A_n$ such that $x\neq y$, $f(\{t_n, x\})\neq f(\{t_n, y\})$.
    \item $f[B]$ is an infinite, Dedekind-finite set. 
    \item For each natural number $m$, if  $c_0, ...,c_m\in f[B]$, then there is an edge coloring $h: B \to f[B]\setminus\{c_0, ..., c_m\}$ such that $h(x) = f(x)$ whenever $f(x)\neq c_i$. 
\end{enumerate}
} 
\end{lem}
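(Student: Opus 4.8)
The plan is to prove Lemma~\ref{lemma 4.6} as the edge-colouring counterpart of Claim~\ref{claim 4.4}, driven by the same mechanism that Herrlich--Tachtsis used for chromatic numbers: \emph{an infinite colour class over $B$ manufactures a partial choice function for $\mathcal{A}$}. The first step I would take makes the three cases (distinguishing proper edge colouring, neighbour-distinguishing edge colouring, total colouring) uniform: in each of these notions, two edges that share an endpoint must receive distinct colours (for the first two this is properness; for a total colouring it is the requirement that adjacent edges differ). Hence for every $n\in\omega$ the edges $\{t_n,x\}$ with $x\in A_n$, all incident with $t_n$, carry pairwise distinct colours, which is exactly part~(1); and since each $x\in A_n$ has degree $1$, the edge $\{t_n,x\}$ is its only incident edge, so the colours of these ``pendant'' edges are constrained only through $t_n$. (In the total-colouring case one simply works with the restriction of $f$ to $E_G$ throughout.)

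The key auxiliary fact I would establish next is that $f^{-1}(c)\cap B$ is \emph{finite} for every colour $c$. By part~(1) this set contains at most one edge of the form $\{t_n,x\}$, $x\in A_n$, for each fixed $n$; so if it were infinite it would contain such an edge for infinitely many $n$, and assigning to each of those $n$ the unique $x\in A_n$ with $f(\{t_n,x\})=c$ would be a partial choice function for $\mathcal{A}$, contradicting the hypothesis on $\mathcal{A}$. Part~(2) then follows quickly. If $f[B]$ were finite, then $B=\bigcup_{c\in f[B]}\bigl(f^{-1}(c)\cap B\bigr)$ would be a finite union of finite sets, hence finite, which is impossible since $\{t_n,x\}\mapsto n$ maps $B$ onto $\omega$. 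For Dedekind-finiteness I would reuse the argument of Claim~\ref{claim 4.4}(4): recalling that $\bigcup_{n\in\omega}A_n$ is Dedekind-finite because $\mathcal{A}$ has no partial choice function, a hypothetical countably infinite subset $\{c_i:i\in\omega\}$ of $f[B]$ yields, after fixing a well-ordering of $\mathcal{A}$, least indices $n_i$ with $f^{-1}(c_i)$ meeting $\{\{t_{n_i},x\}:x\in A_{n_i}\}$ and hence unique vertices $x_i\in A_{n_i}$ with $f(\{t_{n_i},x_i\})=c_i$; distinct $c_i$ force distinct edges $\{t_{n_i},x_i\}$ and therefore, by disjointness of the $A_n$, distinct $x_i$, producing a countably infinite subset of $\bigcup_n A_n$, a contradiction.

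For part~(3), reading ``$h(x)=f(x)$ whenever $f(x)\neq c_i$'' as ``$h$ agrees with $f$ on every edge whose $f$-colour is not among $c_0,\dots,c_m$'', I would set $I=\{n\in\omega:\ f(\{t_n,x\})\in\{c_0,\dots,c_m\}\text{ for some }x\in A_n\}$, which is finite because it is contained in the union of the (finite) index sets of the sets $f^{-1}(c_j)\cap B$, each finite by the auxiliary fact. Then $\bigcup_{n\in I}f\bigl[\{\{t_n,x\}:x\in A_n\}\bigr]$ is a finite set, so by part~(2) there is a colour $b\in f[B]$ lying outside it, whence $b\notin\{c_0,\dots,c_m\}$; defining $h(e)=f(e)$ when $f(e)\notin\{c_0,\dots,c_m\}$ and $h(e)=b$ otherwise gives the required map $h\colon B\to f[B]\setminus\{c_0,\dots,c_m\}$.

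I do not expect a serious obstacle: once Claim~\ref{claim 4.4} is in hand the combinatorics is routine. The one point that needs genuine care is the auxiliary finiteness fact --- one must verify that the ``selection'' producing the partial choice function runs along the well-ordered index set $\omega$ and relies on the \emph{uniqueness} supplied by part~(1), so that no disguised appeal to choice enters. This is precisely the Herrlich--Tachtsis device of Claim~\ref{claim 4.4}, transferred from the vertices in $\bigcup_n A_n$ to the edges in $B$; a secondary, purely bookkeeping, point is to remember that in the total-colouring case $f$ is a function on $V_G\cup E_G$, so one argues with its restriction to the edges.
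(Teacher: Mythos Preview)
Your proposal is correct and follows essentially the same route the paper indicates (``modifying the arguments of Claims~\ref{claim 4.3} and~\ref{claim 4.4}''): the edge analog of the Herrlich--Tachtsis device, with finiteness of each colour class $f^{-1}(c)\cap B$ as the driving fact. One point worth highlighting is that your argument for part~(1) via \emph{properness at the common endpoint $t_n$} is not merely a shortcut but the right uniform argument: the orbit approach suggested by Claim~\ref{claim 4.3} only applies to the distinguishing case, whereas adjacency of the edges $\{t_n,x\}$ and $\{t_n,y\}$ forces $f(\{t_n,x\})\neq f(\{t_n,y\})$ in all three situations (distinguishing proper, neighbour-distinguishing, and total), since each is in particular a proper edge colouring on $E_G$.
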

\begin{proof}
    This follows modifying the arguments of Claims \ref{claim 4.3}, and \ref{claim 4.4}.
\end{proof}

\subsection{{\em G} has no distinguishing chromatic index}
In this subsection, we show that condition (3) implies $\mathsf{PAC_{fin}^{\omega}}$.
Assume that the graph $G$ has a distinguishing chromatic index (say $\kappa$), and let $f: E_{G}\to C$ be a distinguishing proper edge coloring with $\vert C\vert=\kappa$.
By Lemma \ref{lemma 4.6}, $f[B]$ is an infinite, Dedekind-finite set. Moreover, for some $c_0, c_1,c_2\in f[B]$, we can define an edge coloring $h: B \to f[B]\setminus\{c_0, c_1, c_2\}$ as in Lemma \ref{lemma 4.6}(3). 
Define,
\begin{enumerate}
    \item $h(\{t'', t'\})= c_0$, $h(\{t', t_{0}\})= c_1$,
    \item $h(\{t_{2n}, t_{2n+1}\})= c_0$ and $h(\{t_{2n+1}, t_{2n+2}\})= c_1$ for all $n\in \omega$. 
\end{enumerate}
We obtain a distinguishing proper edge coloring $h:E_{G}\to f[B]\setminus\{c_2\}$, with 
$f[B]\setminus\{c_2\} \prec f[B]\preceq C$ 
as $f[B]$ is Dedekind-finite, contradicting the fact that $\chi'_{D}(G)=\kappa$.

This shows that K\H{o}nig’s Lemma is equivalent to condition (3) in Theorem \ref{Theorem 4.1}.

\subsection{{\em G} has no neighbor-distinguishing index}
Here we show that condition (6) implies $\mathsf{PAC_{fin}^{\omega}}$.
Assume that $G$ has a neighbor-distinguishing index (say $\kappa$), and let $f: E_{G}\to C$ be a neighbor-distinguishing edge coloring with $|C|=\kappa$. 
By Lemma \ref{lemma 4.6}, $f[B]$ is an infinite, Dedekind-finite set and for some $c_0, c_1,c_2,c_3\in f[B]$, we can define an edge coloring $h: B \to f[B]\setminus\{c_0, c_1, c_2, c_3\}$ as in Lemma \ref{lemma 4.6}(3). 
Define, 
\begin{enumerate}
    \item $h(\{t'', t'\})= c_1$, $h(\{t', t_{0}\})= c_2$,
    \item $h(\{t_{3n}, t_{3n+1}\})= c_0$, $h(\{t_{3n+1}, t_{3n+2}\})= c_1$, and $h(\{t_{3n+2}, t_{3n+3}\})= c_2$ for all $n\in \omega$. 
\end{enumerate}

Then, $h:E_{G}\to f[B]\setminus\{c_3\}$ is a neighbor-distinguishing edge coloring of $G$. Finally, 
$f[B]\setminus\{c_{3}\} \prec  f[B] \preceq C$
contradicts the fact that $\kappa$ is the neighbor-distinguishing index of $G$.    

Thus, K\H{o}nig’s Lemma is equivalent to condition (6) in Theorem \ref{Theorem 4.1}.

\subsection{{\em G} has no total chromatic number}
We show that condition (4) implies $\mathsf{PAC_{fin}^{\omega}}$.
Assume that $G$ has a total chromatic number. Let  $f: V_{G}\cup E_{G} \to C$ be a total coloring with $|C|=\kappa$, where $\kappa$ is the total chromatic number of $G$. 
By Lemma \ref{lemma 4.6}, $f[B]$ is an infinite, Dedekind-finite set and for some $c_i\in f[B]$ where $0\leq i\leq 5$, we can define an edge coloring $h: B \to f[B]\setminus\{c_0, c_1, c_2, c_3, c_4, c_5\}$ as in Lemma \ref{lemma 4.6}(3). 
Let,
\begin{enumerate}
    \item $h(\{t'', t'\})= c_0$ and $h(\{t', t_{0}\})= c_1$.
    \item $h(\{t_{2n}, t_{2n+1}\})= c_0$ and $h(\{t_{2n+1}, t_{2n+2}\})= c_1$ for all $n\in \omega$. 
    \item $h(t'')=c_{2}$ and $h(t')=c_{3}$.
    \item $h(t_{2n})=c_{2}$ and $h(t_{2n+1})=c_{3}$ for all $n\in\omega$.
    \item $h(x)=c_{4}$ for all $x\in \bigcup_{i\in \omega} A_{i}$. 
\end{enumerate}

Then, $h: V_{G}\cup E_{G} \rightarrow f[B]\backslash \{c_{5}\}$ is a total coloring of $G$ and 
$f[B]\setminus\{c_{5}\} \prec  f[B] \preceq C$
contradicts the fact that $\kappa$ is the total chromatic number of $G$.    

This shows that K\H{o}nig’s Lemma is equivalent to condition (4) in Theorem \ref{Theorem 4.1}.

\subsection{{\em G} has no total distinguishing chromatic number} 
Here we show that condition (5) implies $\mathsf{PAC_{fin}^{\omega}}$.
Assume that $G$ has a total distinguishing chromatic number, and let  $f: V_{G}\cup E_{G} \to C$ be a properly total coloring with $|C|=\kappa$ such that only the trivial automorphism
preserves it, where $\kappa$ is the total distinguishing chromatic number of $G$. 
We claim that the coloring $h$ in subsection 4.6 is defined in a way so that only the trivial automorphism 
preserves $h$. 
In particular, by Claim \ref{claim 4.2}, the vertices $t', t''$, and $t_m$ are fixed by every automorphism for each non-negative integer $m$. Since $h$ is a properly total coloring, any neighbor of $t_{m}$ has to be mapped into itself because every edge incident
to $t_{m}$ has a distinct color by Lemma \ref{lemma 4.6}(1).
Thus, following the arguments in subsection 4.6, we obtain a contradiction. So K\H{o}nig’s Lemma is equivalent to condition (5) in Theorem \ref{Theorem 4.1}.

This completes the proof of Theorem 4.1.

\section{Neighborhood distinguishing and Odd vertex colorings}

We prove the following.
\begin{thm}\label{Theorem 5.1}{($\mathsf{ZF}$)}{\em The following statements are equivalent:
\begin{enumerate}
\item K\H{o}nig’s Lemma.
\item Every infinite locally finite connected graph has an odd chromatic number.
\item Every infinite locally finite connected graph has a neighbor-distinguishing number.
\end{enumerate}
}
\end{thm}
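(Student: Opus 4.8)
The plan is to imitate the proof of Theorem \ref{Theorem 4.1}. The implications $(1)\Rightarrow(2)$ and $(1)\Rightarrow(3)$ are immediate: arguing as in the first paragraph of the proof of Theorem \ref{Theorem 3.3}, König's Lemma (equivalently $\mathsf{AC^{\omega}_{fin}}$) forces $V_G$ to be countably infinite, hence well-orderable, for any infinite locally finite connected $G$, and then Observation \ref{Observation 3.2} already supplies an odd chromatic number and a neighbor-distinguishing number. Since $\mathsf{AC^{\omega}_{fin}}$ is equivalent to $\mathsf{PAC^{\omega}_{fin}}$, to prove $(2)\Rightarrow(1)$ and $(3)\Rightarrow(1)$ it is enough, given a disjoint countably infinite family $\mathcal{A}=\{A_n:n\in\omega\}$ of non-empty finite sets admitting no partial choice function, to build one infinite locally finite connected graph with neither an odd chromatic number nor a neighbor-distinguishing number.

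The graph $G$ of Section 4 will not serve here, since its leaves $A_n$ are pairwise non-adjacent: one checks directly that $G$ admits a proper $4$-odd-colouring (colour the path $t''\,t'\,t_0\,t_1\cdots$ with period $3$ and all leaves with a fourth colour) and, similarly, a proper $4$-neighbor-distinguishing colouring in $\mathsf{ZF}$, so $\chi_o(G)$ and the neighbor-distinguishing number of $G$ exist in $\mathsf{ZF}$. I would instead take $G^{\star}$ with vertex set $(\bigcup_n A_n)\cup\{t_n:n\in\omega\}\cup\{t',t''\}$, edges forming the ray $t''\,t'\,t_0\,t_1\,t_2\cdots$, and, in addition, making each $\{t_n\}\cup A_n$ a clique. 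Then $G^{\star}$ is infinite, locally finite and connected, and the key gain is that \emph{every} proper vertex colouring $f$ of $G^{\star}$ is injective on each $A_n$. As in Claims \ref{claim 4.3}--\ref{claim 4.4}, this injectivity makes $f^{-1}(c)\cap\bigcup_n A_n$ finite for every colour $c$ (else $\mathcal{A}$ has a partial choice function), hence $f[\bigcup_n A_n]$ is infinite and Dedekind-finite; and the colour-compression device of Claim \ref{claim 4.4}(5) still applies and may be carried out so as to keep $f$ injective on each $A_n$, since only finitely many colour classes are altered and $f[\bigcup_n A_n]$ is infinite.

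Given this, the two remaining implications run parallel to subsections 4.2--4.7. For $(2)\Rightarrow(1)$, suppose $G^{\star}$ had an odd chromatic number $\kappa$; fix an odd colouring $f:V_{G^{\star}}\to C$ with $|C|=\kappa$ and $f$ surjective, pick colours $c_0,\dots,c_3\in f[\bigcup_n A_n]$, recolour $\bigcup_n A_n$ by $h:\bigcup_n A_n\to f[\bigcup_n A_n]\setminus\{c_0,\dots,c_3\}$ injectively on each $A_n$, and colour the ray $t''\,t'\,t_0\,t_1\cdots$ with the repeating pattern $c_0,c_1,c_2,c_0,c_1,\dots$, so that any two ray-vertices at distance $2$ receive distinct colours. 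Then $h$ is proper (on the clique $\{t_n\}\cup A_n$ the colour of $t_n$ lies in $\{c_0,c_1,c_2\}$, disjoint from $h[A_n]$) and odd (each $x\in A_n$ sees $t_n$ once; each $t_n$, and also $t'$, sees its two ray-neighbours, which have distinct colours, once each; $t''$ is trivial); and since $f[\bigcup_n A_n]$ is infinite Dedekind-finite, the colours used by $h$ lie in $f[\bigcup_n A_n]\setminus\{c_3\}\prec f[\bigcup_n A_n]\preceq C$, contradicting minimality of $\kappa$. For $(3)\Rightarrow(1)$, the same $G^{\star}$ and the same recolouring work: since each $A_n$ is coloured injectively, the neighbour-colour-set condition is automatic on every clique edge $xy$ and every edge $t_nx$ with $x\in A_n$, because $h(N(x))=\{h(t_n)\}\cup(h[A_n]\setminus\{h(x)\})$ contains neither $h(x)$ nor can it equal $h(N(t_n))\supseteq h[A_n]\ni h(x)$; thus only the ray edges remain, and these are handled by the period-$3$ pattern, since $h(N(t_n))\cap\{c_0,c_1,c_2\}=\{h(t_{n-1}),h(t_{n+1})\}$ is a two-element set that changes as one moves along the ray. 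The same cardinality count yields the contradiction.

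I expect the main obstacle to be bookkeeping rather than ideas: checking that the periodic recolouring of the ray is compatible with the clique recolourings, so that $h$ is at once proper, odd (respectively neighbor-distinguishing), and injective on every $A_n$ — particularly at the three vertices $t''$, $t'$, $t_0$ where the pendant $t''\,t'$ meets the ray — and confirming that the Appendix definitions of the odd chromatic number and the neighbor-distinguishing number agree with those used above (if, say, closed neighbourhoods or multisets of colours are intended, the same argument adapts, at the cost of a few extra freed colours). A secondary point, already flagged, is that Claim \ref{claim 4.4}(5) was stated for vertex colourings without an injectivity clause, so one must note that the recolouring there can be performed while preserving injectivity on each $A_n$.
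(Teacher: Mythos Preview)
Your proposal is correct and follows essentially the same approach as the paper: the paper's graph $G_{1}$ is precisely your $G^{\star}$ with the pendant $t'',t'$ removed, i.e., the $A_{n}$ are turned into cliques so that every proper vertex colouring is injective on each $A_{n}$, and then Claim~\ref{claim 4.4} and the recolouring device run verbatim. The only cosmetic differences are that the paper omits the pendant $t'',t'$ (it is not needed, since no automorphism rigidity is required here---injectivity on $A_{n}$ comes from properness alone), and for the odd case the paper colours the ray with period~$2$ (colours $c_{0},c_{1}$, reserving $c_{2}$ for the strict drop) rather than your period~$3$; your period-$3$ pattern works just as well and, as you note, has the side benefit of handling the neighbor-distinguishing case simultaneously.
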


Implications (1)$\Rightarrow$(2)-(3) follow from Observation \ref{Observation 3.2}, and the fact that $\mathsf{AC_{fin}^{\omega}}$ implies ``Every infinite locally finite connected graph is countably infinite''.
In view of the proof of Theorem \ref{Theorem 4.1}, it suffices to show that 
each of the conditions (2)-(3) implies $\mathsf{PAC_{fin}^{\omega}}$. Assume $\mathcal{A}$ and $T$ as in the proof of Theorem \ref{Theorem 4.1}.
Consider the following infinite locally finite connected graph $G_{1}=(V_{G_{1}}, E_{G_{1}})$ (see Figure \ref{Figure 3}):
  
		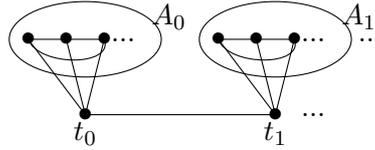
\begin{figure}[!ht]
			\centering
			\begin{minipage}{\textwidth}
				\centering
				\begin{tikzpicture}[scale=0.5]
					\draw (-2.5, 1) ellipse (2 and 1);
					\draw (-4,1) node {$\bullet$};
					\draw (-3,1) node {$\bullet$};
					\draw (-2,1) node {$\bullet$};
					\draw (-1.5,1) node {...};
					\draw (-0.3,1.6) node {$A_{0}$};
					\draw (-4,1) -- (-3,1);
					\draw (-3,1) -- (-2,1);
					\draw (-2,1) to[out=-70,in=-70] (-4,1);
					\draw (-2.5,-1) node {$\bullet$};
					\draw (-2.5,-1.5) node {$t_{0}$};
					\draw (-4,1) -- (-2.5,-1);
					\draw (-3,1) -- (-2.5,-1);
					\draw (-2,1) -- (-2.5,-1);
					\draw (2.5, 1) ellipse (2 and 1);
					\draw (1,1) node {$\bullet$};
					\draw (2,1) node {$\bullet$};
					\draw (3,1) node {$\bullet$};
					\draw (3.5,1) node {...};
					\draw (4.7,1.6) node {$A_{1}$};
					\draw (1,1) -- (2,1);
					\draw (2,1) -- (3,1);
					\draw (1,1) to[out=-70,in=-70] (3,1);
					\draw (2.5,-1) node {$\bullet$};
					\draw (2.5,-1.5) node {$t_{1}$};
					\draw (3,1) -- (2.5,-1);
					\draw (2,1) -- (2.5,-1);
					\draw (1,1) -- (2.5,-1);
					\draw (-2.5,-1) -- (2.5,-1);
					\draw (3.5,-1) node {...};
					\draw (5,1) node {...};
				\end{tikzpicture}
			\end{minipage}
			\caption{\em Graph $G_{1}$, an infinite locally finite connected graph.}
            \label{Figure 3}
		\end{figure}
  
\begin{alignat*}{3}
V_{G_1} &:= (\bigcup_{n\in\omega}A_n)\cup T,     \\
E_{G_1} &:= \big\{ \{t_n,t_{n+1}\}: n\in\omega\big\} &&\cup\big\{ \{t_n,x\}: n\in\omega,x\in A_n \big \} 
&&\cup\big\{ \{x,y\}: n\in\omega,x,y\in A_n,x\neq y \big\}.  
\end{alignat*}

\begin{claim}\label{claim 5.2}
{\em If $f: V_{G_{1}}\rightarrow C$ is an odd proper vertex coloring (resp. neighbor-distinguishing vertex coloring) of $G_{1}$, then the following holds:

\begin{enumerate}
    \item $f[\bigcup_{n\in \omega}A_{n}]$ is an infinite, Dedekind-finite set. 
    \item For each natural number $m$, if  $c_0, ...,c_m\in f[\bigcup_{n\in \omega}A_{n}]$, then there is 
    an odd proper vertex coloring (resp. neighbor-distinguishing vertex coloring)
    $h: \bigcup_{n\in \omega}A_{n} \to f[\bigcup_{n\in \omega}A_{n}]\setminus\{c_0, ..., c_m\}$ such that $h(x) = f(x)$ whenever $f(x)\neq c_i$.
\end{enumerate}
} 
\end{claim}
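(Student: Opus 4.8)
The plan is to transcribe the proof of Claim~\ref{claim 4.4} (and of Lemma~\ref{lemma 4.6}) to the graph $G_1$, letting the clique structure of the sets $A_n$ play the role that the orbit computation of Claim~\ref{claim 4.3} played for $G$. Note that, unlike for $G$, no automorphism analysis is needed here, since neither odd colorings nor neighbor-distinguishing colorings involve automorphisms. The one preliminary fact I would isolate is that $f$ is injective on each $A_n$: an odd proper coloring has this property because $A_n$ induces a clique in $G_1$, and a neighbor-distinguishing coloring has it because any two distinct vertices $x,y$ of $A_n$ are adjacent with neighbourhoods that differ only by interchanging $x$ and $y$, so the defining condition of such a coloring forces $f(x)\neq f(y)$. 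Equivalently, $f^{-1}(c)\cap A_n$ has at most one element for every colour $c$ and every $n$.

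Granting this, part (1) runs just as in Claim~\ref{claim 4.4}(2)--(4). For each colour $c$ the set $f^{-1}(c)\cap\bigcup_{n}A_n$ meets every $A_n$ in at most one point, so were it infinite it would provide a partial choice function for the infinite subfamily $\{A_n : f^{-1}(c)\cap A_n\neq\emptyset\}$ of $\mathcal{A}$, which is impossible; hence it is finite. As $\bigcup_n A_n$ is infinite (it maps onto $\omega$ via $x\mapsto n$ where $x\in A_n$) and equals $\bigcup_{c\in f[\bigcup_n A_n]}(f^{-1}(c)\cap\bigcup_n A_n)$, finiteness of $f[\bigcup_n A_n]$ would exhibit $\bigcup_n A_n$ as a finite union of finite sets, hence finite --- a contradiction, so $f[\bigcup_n A_n]$ is infinite. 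For Dedekind-finiteness, one first recalls that $\bigcup_n A_n$ is Dedekind-finite because $\mathcal{A}$ has no partial choice function (a countably infinite subset of it would project, via $x\mapsto n$ together with minimality in $\omega$, to a partial choice function for $\mathcal{A}$); then, if $\{c_i:i\in\omega\}\subseteq f[\bigcup_n A_n]$ were countably infinite, setting $n(i)=\min\{n:f^{-1}(c_i)\cap A_n\neq\emptyset\}$ and letting $d_i$ be the unique element of $f^{-1}(c_i)\cap A_{n(i)}$ produces a countably infinite subset $\{d_i:i\in\omega\}$ of $\bigcup_n A_n$ (the $d_i$ being distinct as they carry distinct colours), contradicting its Dedekind-finiteness.

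For part (2), fix $c_0,\dots,c_m\in f[\bigcup_n A_n]$. Since each $f^{-1}(c_j)\cap\bigcup_n A_n$ is finite, the index set $I=\bigcup_{j\le m}\{n:f^{-1}(c_j)\cap A_n\neq\emptyset\}$ is finite, so $\{c_0,\dots,c_m\}\cup\bigcup_{n\in I}f[A_n]$ is finite; as $f[\bigcup_n A_n]$ is infinite and an infinite set minus a finite one is infinite, a finite recursion picks pairwise distinct colours $b_0,\dots,b_m$ in $f[\bigcup_n A_n]\setminus\big(\{c_0,\dots,c_m\}\cup\bigcup_{n\in I}f[A_n]\big)$. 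Put $h(x)=f(x)$ when $f(x)\notin\{c_0,\dots,c_m\}$ and $h(x)=b_j$ when $f(x)=c_j$. This gives a map $h:\bigcup_n A_n\to f[\bigcup_n A_n]\setminus\{c_0,\dots,c_m\}$ agreeing with $f$ off $\{c_0,\dots,c_m\}$; moreover, since the $b_j$ are distinct and avoid $\bigcup_{n\in I}f[A_n]$ while each $A_n$ contains at most one vertex of each colour $c_j$, the map $h$ remains injective on every $A_n$, which is the form in which it will be used in the proof of Theorem~\ref{Theorem 5.1}.

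I expect no real obstacle here: the argument is a routine re-run of Claims~\ref{claim 4.3}--\ref{claim 4.4} using only that finite unions of finite sets are finite, that an infinite set minus a finite one is infinite, and that $\omega$ and the countable family $\mathcal{A}$ are well-orderable --- all available in $\mathsf{ZF}$. The two points I would be careful about are the preliminary claim that both kinds of coloring force injectivity on the cliques $A_n$ (so that $G_1$'s clique structure can substitute for the orbit argument used for $G$), and the use of $m+1$ distinct recolouring colours $b_j$ rather than a single one in part (2), which is exactly what keeps $h$ injective on those $A_n$ that happen to contain vertices of several of the removed colours.
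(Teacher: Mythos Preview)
Your proposal is correct and follows essentially the same approach as the paper, which simply states that the claim follows by modifying the arguments of Claims~\ref{claim 4.3} and~\ref{claim 4.4}; you have carried out that modification in detail, correctly replacing the orbit argument of Claim~\ref{claim 4.3} by the observation that each $A_n$ is a clique in $G_1$. One small simplification: for the neighbor-distinguishing case you need not analyse neighbourhoods at all, since by definition such a coloring is already proper, and properness alone forces injectivity on the clique $A_n$.
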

\begin{proof}
    This follows modifying the arguments of Claims \ref{claim 4.3}, and \ref{claim 4.4} (cf. \cite[the proof of Theorem 4.2]{BMG2024} as well as \cite[the proof of Proposition 23]{HT2006}).
\end{proof}

\subsection{$G_{1}$ has no odd chromatic number}
Assume that $G_{1}$ has an odd chromatic number (say $\kappa$), and let  $f: V_{G_{1}} \to C$ be a $C$-odd proper vertex coloring with $|C|=\kappa$. 
By Claim \ref{claim 5.2}(1), $f[\bigcup_{n\in \omega}A_{n}]$ is an infinite, Dedekind-finite set. Consider the vertex coloring $h: \bigcup_{n\in \omega}A_{n} \to f[\bigcup_{n\in \omega}A_{n}]\setminus\{c_0, c_1, c_2\}$ for some $c_0, c_1,c_2\in f[\bigcup_{n\in \omega}A_{n}]$ as in Claim \ref{claim 5.2}(2). 
Let,
    $h(t_{2n})=c_{0}$, and $h(t_{2n+1})=c_{1}$ for all $n\in\omega$. 
Then, $h:V_{G_{1}} \rightarrow (f[\bigcup_{n\in \omega}A_{n}]\backslash \{c_{2}\})$ is a proper vertex coloring of $G_{1}$. 
The following arguments show that $h$ is an odd proper vertex coloring:
\begin{enumerate}
    \item If $t_{i}\in T$, then pick the color $c$
of an arbitrary vertex of $A_{i}$, say $a_{i}$. Since $h$ is a proper vertex coloring, $h^{-1}(c)\cap N(t_{i})=\{a_{i}\}$ where $N(v)=\{u \in V_{G_{1}} : \{u,v\} \in E_{G_{1}}\}$ is the open neighborhood of $v$ for any $v\in V_{G_{1}}$. 

   \item If $x\in A_{m}$ for some $m\in\omega$, then let $c\in \{c_{0},c_{1}\}$ be the color of $t_{m}$. Clearly, $h^{-1}(c)\cap N(x)=\{t_{m}\}$ as neither $c_{0}$ nor $c_{1}$ belongs to $h[\bigcup_{n\in \omega}A_{n}]$ because $h$ is a proper vertex coloring.
\end{enumerate}

We define $C_{1}=f[\bigcup_{n\in\omega} A_{n}]\backslash \{c_{2}\}$. 
Since $f[\bigcup_{n\in \omega}A_{n}]$ is Dedekind-finite,
$C_{1} \prec f[\bigcup_{n\in\omega} A_{n}] \preceq C$. This contradicts the
fact that $\kappa$ is the odd chromatic number of $G_{1}$. 

\subsection{$G_{1}$ has no neighbor-distinguishing number}
Assume that $G_{1}$ has a neighbor-distinguishing number (say $\kappa$), and let $f: V_{G_{1}} \to C$ be a neighbor-distinguishing vertex coloring with $|C|=\kappa$. For some $c_i\in f[\bigcup_{n\in \omega}A_{n}]$ where $0\leq i\leq 3$, we pick a vertex coloring $h: \bigcup_{n\in \omega}A_{n}\rightarrow (f[\bigcup_{n\in \omega}A_{n}]\backslash \{ c_{0},c_{1},c_{2}, c_{3}\})$ as in Claim \ref{claim 5.2}(2). Let,
$h(t_{3n})=c_{0}$, $h(t_{3n+1})=c_{1}$, and $h(t_{3n+2})=c_{2}$, for all $n\in\omega$. 
Then, $h: V_{G_{1}} \rightarrow f[\bigcup_{n\in\omega} A_{n}]\backslash \{c_{3}\}$ is a neighbor-distinguishing vertex coloring of $G_{1}$. The rest follows by the arguments of subsection 5.1.

\section{Analogs of Brooks' theorem fail in ZF}
We apply the methods of sections 4 and 5 to show that the results in Fact \ref{Fact 1.1} and Theorem \ref{Theorem 3.3} fail in a model of $\mathsf{ZF}$. 

\begin{thm}\label{Theorem 6.1}
{\em 
There is a model of $\mathsf{ZF}$ where the following statements fail:
\begin{enumerate}
    \item If $G$ is a connected infinite graph where $\Delta(G)$ is finite, then $\chi(G)\leq \Delta(G)$,
    \item If $G$ is a connected infinite graph where $\Delta(G)$ is finite, then $\chi'(G)\leq \Delta(G)+1$,
    \item If $G$ is a connected infinite graph where $\Delta(G)$ is finite, then $D(G)\leq \Delta(G)$,
    \item If $G$ is a connected infinite graph where $\Delta(G)\geq 3$ is finite, then $D(G)\leq \Delta(G)-1$,
    \item If $G$ is a connected infinite graph where $\Delta(G)\geq 3$ is finite, then $D'(G)\leq \Delta(G)-1$,
    \item If $G$ is a connected infinite graph where $\Delta(G)$ is finite, then $\chi_{D}(G)\leq 2\Delta(G)-1$,
    \item If $G$ is a connected infinite graph where $\Delta(G)$ is finite, then $\chi'_{D}(G)\leq \Delta(G)+2$,
    \item If $G$ is a connected infinite graph where $\Delta(G)$ is finite, then $\chi''(G)\leq \Delta(G)+10^{26}$,
    \item If $G$ is a connected infinite graph where $\Delta(G)$ is finite, then $\chi''_{D}(G)\leq \Delta(G)+10^{26}+1$,
    
    \item If $G$ is a connected infinite graph where $\Delta(G)$ is finite, then $\chi_{o}(G)\leq 2\Delta(G)$,

    \item If $G$ is a connected infinite graph where $\Delta(G)$ is finite, then $D''(G)\leq \lceil \sqrt{\Delta(G)}\rceil$.
\end{enumerate}
Thus, the results in Fact \ref{Fact 1.1}(1)-(8) fail in the model.
}
\end{thm}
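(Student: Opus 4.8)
The plan is to exhibit a single model $\mathcal{M}\models\mathsf{ZF}$ in which all eleven statements are simultaneously false, using the graphs constructed in Sections~4 and 5. Since König's Lemma is equivalent to $\mathsf{AC^{\omega}_{fin}}$ (Definition~\ref{Definition 2.5}), and $\mathsf{AC^{\omega}_{fin}}$ already fails for \emph{families of pairs} in the second Fraenkel permutation model --- which transfers to a genuine $\mathsf{ZF}$-model via the Jech--Sochor embedding theorem (see \cite{Jec1973, HR1998}) --- I would fix $\mathcal{M}\models\mathsf{ZF}$ together with a countably infinite, pairwise disjoint family $\mathcal{A}=\{A_n:n\in\omega\}$ of two-element sets admitting no choice function. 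Working in $\mathcal{M}$, build over $\mathcal{A}$ the graph $G$ of Section~4 and the graph $G_1$ of Section~5. Because every $A_n$ is a pair, $\Delta(G)=\Delta(G_1)=4$, so both graphs are connected, infinite, and locally finite with finite maximum degree $\geq 3$ --- legitimate instances of every clause, including (4), (5), (10), (11). Note finally that for each $n$ the map $\sigma_n$ that transposes the two elements of $A_n$ and fixes all other vertices is a nontrivial automorphism both of $G$ and of $G_1$ (it clearly preserves the edge relation), a fact already implicit in Claims~\ref{claim 4.2} and \ref{claim 4.3}.

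The key observation is that \emph{every bound appearing in the theorem is a concrete natural number} (since $\Delta$ is finite); hence, to refute ``parameter $\leq m$'', it suffices to show that $G$ (resp.\ $G_1$) admits \emph{no} colouring of the relevant kind using only finitely many colours. A finite palette may be taken to be an initial segment of $\omega$, hence canonically linearly ordered --- this is exactly what makes the argument avoid the well-orderability subtleties of Theorem~\ref{Theorem 6.3}. In each case the colouring condition forces the two elements of $A_n$ to carry distinct ``data'': for a proper vertex colouring of $G_1$ (clauses (1), (10): $\chi$, and $\chi_o$ since odd colourings are proper), $f(a_n)\neq f(b_n)$ because $\{a_n,b_n\}\in E_{G_1}$; for a \emph{distinguishing} vertex colouring of $G$ (clauses (3), (4), (6): $D$, $\chi_D$), $f(a_n)\neq f(b_n)$ because otherwise $\sigma_n$ would preserve $f$; for a proper edge or proper total colouring of $G$ (clauses (2), (7), (8), (9): $\chi'$, $\chi'_D$, $\chi''$, $\chi''_D$), the edges $\{t_n,a_n\}$ and $\{t_n,b_n\}$ are incident at $t_n$ and so receive distinct colours; for a distinguishing edge colouring of $G$ (clause (5): $D'$), $f(\{t_n,a_n\})\neq f(\{t_n,b_n\})$ since otherwise $\sigma_n$ preserves $f$; and for a total distinguishing colouring of $G$ with palette $S$ (clause (11): $D''$), the pair $(f(a_n),f(\{t_n,a_n\}))$ differs from $(f(b_n),f(\{t_n,b_n\}))$ in the finite set $S\times S$, again to break $\sigma_n$.

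In every case, once the two elements of $A_n$ carry distinct data in a finite codomain, put the canonical linear order on that codomain and let $x_n\in A_n$ be the element whose datum is least; then $n\mapsto x_n$ is a choice function for $\mathcal{A}$, a contradiction. Therefore no colouring of the relevant kind on $G$ (resp.\ $G_1$) uses finitely many colours in $\mathcal{M}$; since $\Delta(G)=\Delta(G_1)=4$, this refutes each of the finite bounds $\Delta$, $\Delta+1$, $\Delta-1$, $2\Delta-1$, $\Delta+2$, $\Delta+10^{26}$, $\Delta+10^{26}+1$, $2\Delta$, and $\lceil\sqrt 4\rceil=2$, yielding clauses (1)--(11). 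The closing line --- ``the results in Fact~\ref{Fact 1.1}(1)--(8) fail'' --- then follows by matching clauses; e.g.\ Fact~\ref{Fact 1.1}(4), $D'(G)\leq\Delta(G)$, fails because we have excluded \emph{any} finite distinguishing edge colouring of $G$, a fortiori any with $\leq\Delta(G)$ colours. I expect the one genuinely delicate point to be the first step: the counterexample family must consist of \emph{pairs} (or at least of uniformly size-bounded finite sets), since that is exactly what keeps $G$ and $G_1$ locally finite with \emph{finite} maximum degree --- which is why one passes through the second Fraenkel model and Jech--Sochor rather than using an arbitrary $\mathsf{ZF}$-failure of König's Lemma. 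Everything after that is uniform and elementary; in particular it does not require the Herrlich--Tachtsis colour-reduction machinery of Claims~\ref{claim 4.4}, \ref{claim 5.2} and Lemma~\ref{lemma 4.6}, which establishes the stronger non-existence of these parameters but is not needed here.
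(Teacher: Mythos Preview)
Your argument is correct and, in fact, more direct than the paper's. The paper proves (1)--(10) by citing the \emph{non-existence} of the respective parameters established in Theorems~\ref{Theorem 4.1}, \ref{Theorem 5.1} and \cite{BMG2024}, which in turn rely on the Herrlich--Tachtsis colour-reduction trick (Claim~\ref{claim 4.4}(5), Lemma~\ref{lemma 4.6}, Claim~\ref{claim 5.2}) and Dedekind-finiteness of $f[\bigcup A_n]$; only for clause~(11) does the paper give the elementary argument you use throughout (Claim~\ref{Claim 6.2}). You instead observe that since every bound in the statement is a \emph{natural number}, it suffices to rule out colourings with a finite palette, and then the transposition $\sigma_n$ (or incidence at $t_n$, or adjacency in $G_1$) forces the two elements of each $A_n$ to carry distinct data in a canonically ordered finite set, yielding a full choice function for~$\mathcal A$.

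What this buys you: a uniform, self-contained proof that avoids the external reference to \cite{BMG2024} and the Dedekind-finite machinery, and that needs only ``no choice function'' rather than ``no partial choice function''. What the paper's route buys: it simultaneously establishes the stronger conclusion that the parameters themselves do not exist (not merely that they exceed every finite bound), which is the content of Theorems~\ref{Theorem 4.1} and~\ref{Theorem 5.1}. Your remark that the counterexample family must have uniformly bounded size---so that $\Delta(G)$ and $\Delta(G_1)$ are finite---is exactly the point, and is handled in the paper by fixing $\vert A_n\vert=n$ for a single $n\ge 2$; your choice $n=2$ via the second Fraenkel model and Jech--Sochor is the cleanest instance. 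One minor note: for clause~(2) the paper uses $H_1$ rather than $G$, but your use of $G$ works equally well since $\{t_n,a_n\}$ and $\{t_n,b_n\}$ are incident at $t_n$ in both graphs.
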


\begin{proof}
Fix any $n\in \omega\backslash \{0,1\}$. The authors of \cite{DHHKR2008} constructed a model of $\mathsf{ZF}$ where the statement ``Any countably infinite family of $n$-element sets has a partial choice function", i.e. $\mathsf{Form}$ 373($n$) in \cite{HR1998}, fails (see the paragraph after \cite[Corollary 3.2]{DHHKR2008}). 
In particular, $\mathsf{Form}$ 373($n$) fails in the model $\mathcal{N}_{2}(n)$ of \cite{HR1998} (a model of $\mathsf{ZFA}$ i.e., Zermelo-Fraenkel set theory with the Axiom of Extensionality weakened to allow the existence of atoms) (see \cite{HR1998}). 
Since $\phi=``\neg\mathsf{Form}$ 373($n$)'' is a boundable statement, there is a model $\mathcal{M}(n)$ of $\mathsf{ZF}$ where $\phi$ holds.\footnote{We refer the reader to \cite[Note 103, pages 283–286]{HR1998} for the definition of the term ``boundable statement'' and the fact that a boundable sentence is transferable to $\mathsf{ZF}$.}

Let $\mathcal{A}=\{A_{i}:i\in \omega\}$ be a countably infinite family of $n$-element sets without a partial choice function in $\mathcal{M}(n)$. Consider the graphs $G$ and $G_{1}$ from the proof of Theorems \ref{Theorem 4.1} and \ref{Theorem 5.1} (see Figures \ref {Figure 1} and \ref{Figure 3}). Clearly,  $\Delta(G)=\Delta(G_{1})=n+2\geq 4$. By the arguments in the proof of Theorems \ref{Theorem 4.1} and \ref{Theorem 5.1}, $\chi_{D}(G)$, $\chi'_{D}(G)$, $\chi''(G)$, $\chi''_{D}(G)$, and $\chi_{o}(G_{1})$ do not exist. 
Let $H_{1}$ be the graph obtained from the graph $G_{1}$ of Figure \ref{Figure 3} after deleting the edge set $\{\{x,y\}: m\in \omega, x,y\in A_m, x\neq y\}$ (see Figure \ref{Figure 4}). Clearly, $H_1$ is a connected infinite graph where $\Delta(H_{1})=n+2$.

\begin{figure}[!ht]
\centering
\begin{minipage}{\textwidth}
\centering
\begin{tikzpicture}[scale=0.5]
\draw (-2.5, 1) ellipse (2 and 1);
\draw (-4,1) node {$\bullet$};
\draw (-3,1) node {$\bullet$};
\draw (-2,1) node {$\bullet$};
\draw (-1.5,1) node {...};
\draw (-0.3,1.6) node {$A_{0}$};
\draw (-2.5,-1) node {$\bullet$};
\draw (-2.5,-1.5) node {$t_{0}$};
\draw (-4,1) -- (-2.5,-1);
\draw (-3,1) -- (-2.5,-1);
\draw (-2,1) -- (-2.5,-1);
\draw (2.5, 1) ellipse (2 and 1);
\draw (1,1) node {$\bullet$};
\draw (2,1) node {$\bullet$};
\draw (3,1) node {$\bullet$};
\draw (3.5,1) node {...};
\draw (4.7,1.6) node {$A_{1}$};
\draw (2.5,-1) node {$\bullet$};
\draw (2.5,-1.5) node {$t_{1}$};
\draw (3,1) -- (2.5,-1);
\draw (2,1) -- (2.5,-1);
\draw (1,1) -- (2.5,-1);
\draw (-2.5,-1) -- (2.5,-1);
\draw (3.5,-1) node {...};
\draw (5,1) node {...};
\end{tikzpicture}
\end{minipage}
\caption{\em Graph $H_{1}$, a connected infinite graph where $\Delta(H_{1})=n+2\geq 4$.}
\label{Figure 4}
\end{figure}
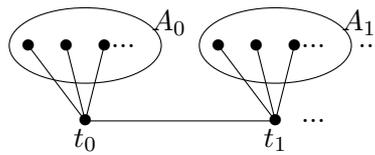
 
By the arguments in the proof of \cite[Claims 4.5, 4.6, 4.9, 4.10]{BMG2024} figured out by the authors, $\chi(G_{1})$, $\chi'(H_{1}), D(G),$ and $D'(G)$ do not exist. 
Thus, (1)-(10) fails in $\mathcal{M}(n)$.

\begin{claim}\label{Claim 6.2}
{\em Let $f: G\rightarrow D''(G)$ be a total distinguishing coloring of $G$, and $B = \{(\{t_n,x\},x): n\in \omega,  x\in A_n\}$. Let $g:B\rightarrow D''(G)\times D''(G)$ be the function that maps $(\{t,x\},x)$ to $(f(\{t,x\}), f(x))$.
Then the following holds:

\begin{enumerate}
    \item For each natural number $n$ and for each $x,y\in A_n$ such that $x\neq y$, we have $g(\{t_{n},x\},x)\neq g(\{t_{n},y\},y)$ 
    (i.e., $\{f(\{t_n, x\}),f(x)\}\neq \{f(\{t_n, y\}),f(y)\}$).
    \item $g[B]$ is an infinite set. 
\end{enumerate}
} 
\end{claim}

\begin{proof}
(1) is straightforward. Define, $N_{(c_{1},c_{2})}=\{(\{t_{n},x\},x)\in g^{-1}(c_{1},c_{2}): n\in\omega, x\in A_{n}\}$ for $(c_{1},c_{2})\in g[B]$. Clearly, $N_{(c_{1},c_{2})}$ is finite; otherwise, $\pi_{2}[N_{(c_{1},c_{2})}]$ will generate a partial choice function of $\mathcal{A}$ by (1), where $\pi_{2}$ is the function that maps $(\{t_{n},x\},x)$ to $x$. 
Now, $B=\bigcup_{(c_{1},c_{2})\in g[B]} N_{(c_{1},c_{2})}$. If $g[B]$ is finite, then $B$ is finite, which is a contradiction. Thus $g[B]$ is infinite. 
\end{proof}

We claim that $D''(G)$ is infinite (if it exists), and so (11) fails in $\mathcal{M}(n)$. If $D''(G)$ is finite, then $D''(G)\times D''(G)$ is finite. This contradicts Claim \ref{Claim 6.2}(2), since $g[B]\subseteq D''(G)\times D''(G)$. 
\end{proof}

\begin{thm}\label{Theorem 6.3}
{\em 
There is a model of $\mathsf{ZF}$ where
the statements ``If $G$ is a connected infinite graph where $\Delta(G)=3$, then $D(G)\leq 2$'' and ``Any connected infinite graph $G$ where $\Delta(G)=3$, has a distinguishing number'' fail. Thus, the result in Fact \ref{Fact 1.1}(9) fails in the model.} 
\end{thm}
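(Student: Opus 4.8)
The plan is to reuse the Russell-type idea behind Theorems~\ref{Theorem 4.1} and~\ref{Theorem 5.1}, but to force $\Delta(G)=3$ by pushing every branch through one auxiliary vertex. Work in a model of $\mathsf{ZF}$ --- the setting of \cite[\S2]{Sta2023} --- carrying a countably infinite, pairwise disjoint family $\mathcal{A}=\{A_n:n\in\omega\}$ of two-element sets $A_n=\{a_n,b_n\}$ with no partial choice function, and crucially \emph{without} assuming that the sets of colours can be well-ordered. Keep the spine $t'',t',t_0,t_1,\dots$ of Figure~\ref{Figure 1} with its edges $\{t'',t'\},\{t',t_0\},\{t_n,t_{n+1}\}$ ($n\in\omega$), but replace the edges $\{t_n,x\}$ ($x\in A_n$) of the graph of Theorem~\ref{Theorem 4.1} by a fresh vertex $s_n$ together with the three edges $\{t_n,s_n\},\{s_n,a_n\},\{s_n,b_n\}$; call the resulting graph $G$. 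Then $G$ is infinite, connected and locally finite, its vertices of degree $3$ are exactly the $t_n$ and the $s_n$, and all other vertices have degree $\le 2$, so $\Delta(G)=3$ (contrast the graph of Theorem~\ref{Theorem 4.1}, where $\Delta=n+2$).

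First I would identify $\mathrm{Aut}(G)$ exactly as in Claim~\ref{claim 4.2}. Using first-order definable degree conditions, $t''$ is the unique vertex of degree $1$ having a neighbour of degree $2$ and $t'$ is the unique vertex of degree $2$, so both are fixed by every automorphism; then an induction along the spine, in which $t_{k+1}$ is separated from $s_k$ by the fact that $s_k$ has a degree-$1$ neighbour while $t_{k+1}$ has none, shows that every automorphism fixes all $t_n$ and all $s_n$, and hence stabilizes each $A_n$ setwise. Since $|A_n|=2$, an automorphism acts on $A_n$ either trivially or as the transposition $(a_n\,b_n)$, and conversely each such transposition (fixing everything else) is an automorphism because $N(a_n)=\{s_n\}=N(b_n)$. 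Therefore a colouring $f$ of $G$ is distinguishing if and only if $f(a_n)\neq f(b_n)$ for every $n$.

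From this both statements fall. If $f\colon V_G\to\{0,1\}$ were a distinguishing $2$-colouring, then for each $n$ exactly one of $a_n,b_n$ would get colour $0$, so $n\mapsto f^{-1}(0)\cap A_n$ would be a choice function on $\mathcal{A}$, a contradiction; hence $G$ has no distinguishing $2$-colouring and ``$D(G)\le 2$'' fails. For the second statement I would reprove, for an \emph{arbitrary} colour set $C$, the analogues of Claim~\ref{claim 4.4}(2)--(5) / Lemma~\ref{lemma 4.6}: if $f\colon V_G\to C$ is distinguishing then each $M_c=f^{-1}(c)\cap\bigcup_{n\in\omega}A_n$ is finite (an infinite $M_c$ meets infinitely many $A_n$ in a singleton, giving a partial choice function), whence $f[\bigcup_{n\in\omega}A_n]$ is infinite (else $\bigcup_{n\in\omega}A_n=\bigcup_c M_c$ would be a finite union of finite sets) and Dedekind-finite (a countably infinite subset of it produces, via least-index preimages, a countably infinite subset of the Dedekind-finite set $\bigcup_{n\in\omega}A_n$), and for any $c_0\in f[\bigcup_{n\in\omega}A_n]$ one recolours the finitely many sets $A_m$ with $m\in\mathrm{Index}(M_{c_0})$ using a fresh colour $b_0\in f[\bigcup_{n\in\omega}A_n]\setminus f[\bigcup_{m\in\mathrm{Index}(M_{c_0})}A_m]$ (such a $b_0$ exists since that image is finite) to obtain a distinguishing colouring $h\colon V_G\to f[\bigcup_{n\in\omega}A_n]\setminus\{c_0\}$, colouring the spine and the $s_n$ arbitrarily. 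Since $f[\bigcup_{n\in\omega}A_n]$ is infinite and Dedekind-finite, $f[\bigcup_{n\in\omega}A_n]\setminus\{c_0\}\prec f[\bigcup_{n\in\omega}A_n]\preceq C$, so every distinguishing colouring of $G$ can be strictly improved; as $G$ does admit a distinguishing colouring (colour the elements of $\bigcup_{n\in\omega}A_n$ pairwise distinctly and the remaining vertices arbitrarily), the family of cardinalities of distinguishing colourings of $G$ has no least element, i.e.\ $D(G)$ does not exist. Nowhere was $C$ assumed well-orderable, which is the strengthening of Stawiski's result.

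The point that needs care is the automorphism analysis in the subcubic regime: with only degree-$3$ vertices available one must separate $t_n$ from $s_n$ (and the spine from the $s_n$) purely by definable/degree data and correctly anchor the induction through the $t',t''$ gadget --- this is exactly what the auxiliary vertices $s_n$ are for. The second, genuinely new, ingredient is running the colour-reduction argument over an arbitrary colour set $C$: one cannot well-order $C$ and delete a least colour, so the argument must proceed through ``$M_c$ finite $\Rightarrow$ image infinite $\Rightarrow$ image Dedekind-finite'' and invoke the Cantor--Schr\"oder--Bernstein theorem (provable in $\mathsf{ZF}$) to get $f[\bigcup_{n\in\omega}A_n]\setminus\{c_0\}\prec C$.
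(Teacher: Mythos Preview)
Your proposal is correct and follows essentially the same route as the paper. Your graph is the paper's $G_2$ with the roles of $r_n,t_n$ relabelled as $t_n,s_n$ and an extra anchoring tail $t'',t'$ adjoined (the paper instead relies on $r_0$ being the unique degree-$2$ vertex and cites Stawiski for the automorphism analysis, which you carry out directly). The colour-reduction argument and the Dedekind-finite bookkeeping are identical to the paper's; your only cosmetic slip is the word ``arbitrarily'' for the spine and the $s_n$ --- you must of course colour them with a fixed colour from $f[\bigcup_n A_n]\setminus\{c_0\}$, exactly as the paper uses $c_0$ after first freeing two colours.
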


\begin{proof}
Fix $n=2$, and assume the model $\mathcal{M}(2)$ as in the proof of Theorem \ref{Theorem 6.1} where
$\mathcal{A}=\{A_{i}:i\in \omega\}$ is a countably infinite family of $n$-element sets without a partial choice function. 
Pick two disjoint countably infinite sequences $T=\{t_{n}:n\in \omega\}$, and $R=\{r_{n}:n\in\omega\}$ which are also disjoint from $A=\bigcup_{i\in\omega}A_{i}$. 
Consider the following graph $G_{2}=(V_{G_{2}}, E_{G_{2}})$ where $\Delta(G_2)=3$ (see Figure \ref{Figure 5}):

\begin{alignat*}{3}
V_{G_{2}} &:= (\bigcup_{n\in\omega}A_n)\cup T \cup R,     \\
E_{G} &:= \big\{ \{r_n,r_{n+1}\}: n\in\omega\big\} &&\cup 
\{\{r_n,t_{n}\}: n\in\omega\big\}\cup
\big\{\{t_n,x\}: n\in\omega,x\in A_n \big\}. \\
\end{alignat*}

\begin{figure}[!ht]
\centering
\begin{minipage}{\textwidth}
\centering
\begin{tikzpicture}[scale=0.45]
\draw (-2.5, 1) ellipse (2 and 1);
\draw (-4,1) node {$\bullet$};
\draw (-2,1) node {$\bullet$};
\draw (-0,1.6) node {$A_{0}$};
\draw (-2.5,-1) node {$\bullet$};
\draw (-3,-1.5) node {$t_{0}$};
\draw (-2.5,-2.5) node {$\bullet$};
\draw (-3,-3) node {$r_{0}$};
\draw (-4,1) -- (-2.5,-1);
\draw (-2,1) -- (-2.5,-1);

\draw (2.5, 1) ellipse (2 and 1);
\draw (1,1) node {$\bullet$};
\draw (3,1) node {$\bullet$};
\draw (5,1.6) node {$A_{1}$};
\draw (2.5,-1) node {$\bullet$};
\draw (2,-1.5) node {$t_{1}$};
\draw (2.5,-2.5) node {$\bullet$};
\draw (2,-3) node {$r_{1}$};
\draw (3,1) -- (2.5,-1);
\draw (1,1) -- (2.5,-1);
\draw (-2.5,-2.5) -- (-2.5,-1);
\draw (2.5,-2.5) -- (2.5,-1);
\draw (-2.5,-2.5) -- (2.5,-2.5);

\draw (7.5, 1) ellipse (2 and 1);
\draw (6,1) node {$\bullet$};
\draw (8,1) node {$\bullet$};
\draw (10,1.6) node {$A_{2}$};
\draw (7.5,-1) node {$\bullet$};
\draw (7,-1.5) node {$t_{2}$};
\draw (7.5,-2.5) node {$\bullet$};
\draw (7,-3) node {$r_{2}$};
\draw (8,1) -- (7.5,-1);
\draw (6,1) -- (7.5,-1);
\draw (7.5,-2.5) -- (7.5,-1);
\draw (2.5,-2.5) -- (7.5,-2.5);

\draw (8.5,-2.5) node {...};
\draw (8.5,-1) node {...};
\draw (10,1) node {...};
\end{tikzpicture}
\end{minipage}
\caption{\em Graph $G_{2}$, a connected infinite graph where $\Delta(G_{2})=3$. In particular, the degree of $t_{i}$ and $r_{i+1}$ is 3 for each $i\in \omega$, while the degree of $r_{0}$ is $2$ and the degree of $x$ is $1$ for each $x\in A$.}
\label{Figure 5}
\end{figure}

We show that $G_{2}$ has no distinguishing number in $\mathcal{M}(2)$. Stawiski \cite[Claim 1]{Sta2023} proved 
that each $x\in V_{G_{2}}\backslash (\bigcup_{n\in\omega}A_n)$ is fixed by any automorphism of $G_{2}$ and for every $i\in\omega$ and $x\in A_{i}$, $Orb_{Aut(G_{2})}(x)=\{g(x): g\in Aut(G_{2})\}=A_i$.  
Assume that $D(G_{2})$ exists. Let  $f: V_{G_2} \to C$ be a distinguishing vertex coloring with $\vert C\vert=D(G_{2})$.
Similar to Claim \ref{claim 4.4}, $f[\bigcup_{n\in \omega}A_n]$ is infinite and Dedekind-finite. 
Consider a coloring $h: \bigcup_{n\in \omega}A_n\to f[\bigcup_{n\in \omega}A_n]\setminus\{c_0,c_{1}\}$ for some $c_{0}, c_{1}\in f[\bigcup_{n\in \omega}A_{n}]$, as in Claim \ref{claim 4.4}. Let,
$h(t)=c_{0}$ for all $t\in R\cup T$.    
Then, $h: V_{G_2} \rightarrow (f[\bigcup_{n\in \omega}A_{n}]\backslash \{c_{1}\})$ is a distinguishing vertex coloring of $G_2$ and 
			$f[\bigcup_{n\in \omega}A_n]\setminus\{c_1\} \prec  f[\bigcup_{n\in\omega} A_{n}] \preceq C$
			contradicts the fact that $\vert C\vert=D(G_{2})$.  
\end{proof}
\section{List-distinguishing chromatic numbers for infinite graphs}
\begin{defn}\label{Definition 7.1}
Let $T$ be a tree rooted at $v$. The {\em parent} of $z$ is the vertex immediately following $z$ on the unique path from $z$ to the root $v$. Two vertices are {\em siblings} if they have the same parent.    
\end{defn}

\begin{defn}\label{Definition 7.2}
Given an assignment $L = {L(v)}_{v\in V_{G}}$ of lists of available colors to the vertices of $G$, we say that $G$ is {\em properly L-distinguishable} if there is a distinguishing proper vertex coloring $f$ of $G$ such that $f(v) \in L(v)$ for all $v\in V_{G}$.  The {\em list-distinguishing chromatic number} of $G$, denoted by $\chi_{D_{L}}(G)$, is the least cardinal $\kappa$ (if it exists) such that $G$ is properly $L$-distinguishable for any list assignment $L$ with $\vert L(v)\vert = \kappa$ for all $v\in V_{G}$ (cf. \cite{FGHSW2013}). 
\end{defn}

\begin{remark}\label{Remark 7.3} 
Since all lists can be identical, $\chi_{D}(G) \leq \chi_{D_{L}}(G)$ for any graph $G$ in $\mathsf{ZFC}$. We construct a connected infinite graph $H$ where $\chi_{D_L}(H)\neq \chi_D(H)$, and $\Delta(H)=5$. Consider the graph $H$ given in Figure \ref{Figure 6}. Since $H$ is bipartite and has no non-trivial automorphisms, we have that $\chi_D(H) = \chi(H)= 2$. 

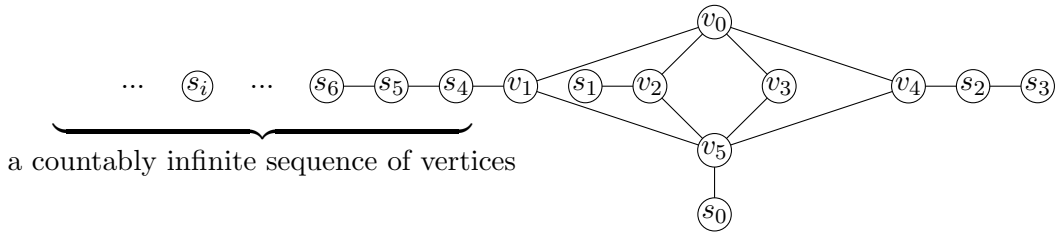
\begin{figure}[!ht]
\begin{minipage}{\textwidth}
\centering
\begin{tikzpicture}[scale=0.85]

\draw[black,] (0,-1) -- (-3,-2); 
\draw[black,] (0,-1) -- (-1,-2);
\draw[black,] (0,-1) -- (1,-2);
\draw[black,] (0,-1) -- (3,-2); 
\draw[black,] (3,-2) -- (4,-2); 
\draw[black,] (4,-2) -- (5,-2); 
\draw[black,] (-1,-2) -- (-2,-2); 

\draw[black,] (0,-3) -- (-3,-2);
\draw[black,] (0,-3) -- (-1,-2);
\draw[black,] (0,-3) -- (1,-2);
\draw[black,] (0,-3) -- (3,-2);
\draw[black,] (0,-3) -- (0,-4);

\draw[black,] (-6,-2) -- (-5,-2);
\draw[black,] (-5,-2) -- (-4,-2);
\draw[black,] (-4,-2) -- (-3,-2);

\node[circ] at (0,-1) {$v_0$};

\node[circ] at (-3,-2) {$v_1$};
\node[circ] at (-4,-2) {$s_{4}$};
\node[circ] at (-5,-2) {$s_{5}$};
\node[circ] at (-6,-2) {$s_{6}$};
\draw (-7,-2) node {$...$};
\node[circ] at (-8,-2) {$s_{i}$};
\draw (-9,-2) node {$...$};
\draw (-7,-2.5) node {$\underbrace{\phantom{n + 1\text{infinite sequence of vertices}}}$};

\draw (-7,-3.2) node {a countably infinite sequence of vertices};

\node[circ] at (-1,-2) {$v_2$};
\node[circ] at (-2,-2) {$s_1$};

\node[circ] at (1,-2) {$v_3$};
\node[circ] at (3,-2) {$v_4$};
\node[circ] at (4,-2) {$s_2$};
\node[circ] at (5,-2) {$s_3$};

\node[circ] at (0,-3) {$v_5$};
\node[circ] at (0,-4) {$s_0$};
\end{tikzpicture}    

\end{minipage}
\caption{\em A connected infinite graph $H$ where $\Delta(H)=5$ and $\chi_{D_L}(H)\neq \chi_D(H)=2$.}
\label{Figure 6}
\end{figure}
\end{remark}

Consider the following assignment of lists: $L(v_0) =\{1,2\}$, $L(v_1) = \{1,3\}$, $L(v_2) = \{2,3\}$, $L(v_3) = \{1,4\}$, $L(v_4) = \{2,4\}$, $L(v_5) =\{3,4\}$, and for $0\leq i \leq \omega$, $L(s_i)=\{5,6\}$. Then there is no proper coloring of $H$ from $\{L(v)\}_{v\in V_H}$, since $\{v_0,\dots, v_5\}$ do not have a proper coloring from the given lists. Thus, $\chi_{D_L}(H) \neq 2$. 


\begin{thm}\label{Theorem 7.4} {\em 
There is a model of $\mathsf{ZF}$ where
the statement
``If $G$ is a connected infinite graph where $\Delta(G)\geq 3$ is finite, then the list-distinguishing chromatic
number $\chi_{D_{L}}(G)$ is at most $2\Delta(G)-1$'' fails. Moreover, the statement holds under K\H{o}nig’s Lemma in $\mathsf{ZF}$.}     
\end{thm}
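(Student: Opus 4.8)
The plan is to treat the two assertions separately: that the inequality is consistently false over $\mathsf{ZF}$, and that it is provable in $\mathsf{ZF}$ together with K\H{o}nig's Lemma. For the first I will reuse the tree $G$ from the proof of Theorem~\ref{Theorem 4.1} (with the parameters fixed in the proof of Theorem~\ref{Theorem 6.1}); for the second I will run the explicit construction behind Fact~\ref{Fact 1.1}(5) with lists, exploiting that K\H{o}nig's Lemma makes $V_G$ well-orderable.

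\emph{Consistent failure over $\mathsf{ZF}$.} Let $\mathcal{A}=\{A_n:n\in\omega\}$ and $G$ be as in the proof of Theorem~\ref{Theorem 6.1}, so $\mathcal{A}$ is a countably infinite family of $n$-element sets with no partial choice function (for a fixed $n\ge 2$) and $\Delta(G)=n+2\ge 4$. I claim that $\chi_{D_L}(G)\neq k$ for every positive integer $k$; since $2\Delta(G)-1$ is a positive integer, this already refutes the stated inequality, regardless of whether $\chi_{D_L}(G)$ exists. To prove the claim, take the constant list assignment $L(v)=\{1,\dots,k\}$. If $k<n$, then every colouring of $G$ with colours from $\{1,\dots,k\}$ fails to be injective on $A_0$, and since a transposition inside a colour class of $f\restriction A_0$ is a nontrivial colour-preserving automorphism of $G$ (by Claims~\ref{claim 4.2} and~\ref{claim 4.3}, restriction is an isomorphism $Aut(G)\to\prod_i Sym(A_i)$), no proper $L$-colouring of $G$ is distinguishing. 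If $k\ge n$ and $f$ were a distinguishing proper $L$-colouring of $G$, then $f\restriction A_i$ would be injective for every $i$, whence $i\mapsto (f\restriction A_i)^{-1}(\min f[A_i])$ would be a choice function for $\mathcal{A}$, contradicting the choice of $\mathcal{A}$. In both cases $G$ is not properly $L$-distinguishable, so $\chi_{D_L}(G)\neq k$.

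\emph{Validity under K\H{o}nig's Lemma.} Assume K\H{o}nig's Lemma and let $G$ be connected and infinite with $3\le\Delta(G)=\Delta<\infty$. As in the opening paragraph of the proof of Theorem~\ref{Theorem 3.3}, $V_G=\bigcup_n N^n(r)$ is a countable union of finite sets, hence countably infinite and well-orderable; fix a well-ordering $<$ of $V_G$, a BFS spanning tree $T$ rooted at a vertex of maximum degree (Observation~\ref{Observation 3.2}), and a ray $P$ of $T$ (K\H{o}nig's Lemma again). Fix an arbitrary list assignment $L$ with $\lvert L(v)\rvert=2\Delta-1$ for every $v$; then $\bigcup_v L(v)$ is again a countable union of finite sets, hence well-orderable, so all selections made below are canonical and no form of $\mathsf{AC}$ is invoked. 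I would then reconstruct the colouring from the proof of Fact~\ref{Fact 1.1}(5) (Imrich--Kalinowski--Pil\'{s}niak--Shekarriz) in a list-respecting way: colour the vertices in a $T$-compatible order (by BFS level, then $<$ within a level) and assign to $v$ the $<$-least colour of $L(v)$ avoiding the already-coloured neighbours of $v$ together with the finitely many ``marking''/``separating'' colours prescribed for $v$ by that scheme along $P$ and its subtree of descendants. The combinatorial crux is that the number of forbidden colours at each vertex is at most $2\Delta-2$, so $L(v)$ always contains an admissible colour; the marking along $P$ forces every colour-preserving automorphism to fix $P$ pointwise, and the separating constraints propagate this rigidity through all of $G$. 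Hence the resulting colouring is a distinguishing proper $L$-colouring, and $\chi_{D_L}(G)\le 2\Delta-1$.

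\emph{Main obstacle.} The $\mathsf{ZF}$-failure half is a routine variant of the arguments of Section~4 and Theorem~\ref{Theorem 6.1}. The real work is the K\H{o}nig's Lemma half, where two points need care. First, unlike for the parameters in Theorem~\ref{Theorem 3.3}, being a distinguishing colouring is not a local property, so a compactness (de Bruijn--Erd\H{o}s) argument is not available and one must adapt the explicit construction of Fact~\ref{Fact 1.1}(5) directly. Second, that construction must be made list-proof: each symmetry-breaking step must be phrased as ``choose a colour outside this short set'' rather than ``choose this particular colour'', and one must verify that the worst-case count of forbidden colours stays $\le 2\Delta-2$ at every vertex---leaves of $T$, the root, and vertices of $P$ included---so that lists of the fixed size $2\Delta-1$ always suffice. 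Once that bookkeeping is done, the well-orderability of $V_G$ and of $\bigcup_v L(v)$ is precisely what removes the choice-theoretic obstruction responsible for the $\mathsf{ZF}$ failure.
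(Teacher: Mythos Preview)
Your $\mathsf{ZF}$-failure half is correct and essentially the paper's argument: identical lists reduce the question to the nonexistence of a distinguishing proper colouring with finitely many colours, and Claims~\ref{claim 4.2}--\ref{claim 4.4} supply this.

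The K\H{o}nig's Lemma half, however, is only a plan, and it diverges from the paper in a way that leaves a genuine gap. The paper does \emph{not} use a ray $P$ here; instead it follows the Imrich--Kalinowski--Pil\'{s}niak--Shekarriz scheme: colour in BFS order so that each vertex avoids $\{c_v\}\cup f[S_<(x)]\cup f[N_<(x)]$ (siblings and earlier neighbours), and make the root $v$ the unique vertex with the property ``colour $c_v$, and all neighbours coloured exactly $c_{v_1},\dots,c_{v_\Delta}$''. The crucial point you assert---that the forbidden set has size at most $2\Delta-2$---is \emph{false} in this scheme: for a vertex whose parent is not the root one has $|S(x)|\le\Delta-2$ and $|N(x)|\le\Delta$, so the bound is $2\Delta-1$, and $A_x$ can be empty. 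The paper handles this by letting $f(x)=c_v$ when $A_x=\emptyset$ and then running a nontrivial \emph{correction pass} over the (possibly infinitely many) vertices $x\neq v$ that accidentally acquire the root's property $(\ast)$. This pass splits into several cases (a sibling with different neighbourhood; all siblings share $N(x)$; a neighbour with/without $c_v$ in its list; etc.), and two of these cases---your ``choose a colour outside this short set'' situations---are new to the list setting and do not occur in the original IKPS proof. Your sketch neither notices that the greedy step can fail nor supplies any analogue of this correction, and your ray-based alternative is not specified well enough to see how it would break symmetry while respecting arbitrary lists. In short, the missing idea is precisely this two-phase structure: greedy-with-possible-overflow, followed by a case analysis that repairs each overflow without creating a new one earlier in the BFS order.
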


\begin{proof}
Let $\mathcal{A}=\{A_{n}:n\in \omega\}$ be a countably infinite family of $2$-element sets without a partial choice function in the model $\mathcal{M}(2)$ of $\mathsf{ZF}$ constructed in the proof of Theorem 6.1. Consider the graph $G$ from the proof of Theorem \ref{Theorem 4.1}, where $\Delta(G)= 4$. We show that $\chi_{D_{L}}(G)$ cannot be finite (if it exists) in  $\mathcal{M}(2)$. Assume that $\chi_{D_{L}}(G)$ exists. 
Let $L = {L(v)}_{v\in V_{G}}$ be an assignment of {\em identical} lists of available colors to the vertices of $G$ such that $\vert L(v)\vert = \chi_{D_{L}}(G)$. Then there is a distinguishing proper vertex coloring $f$ of $G$ such that $f(v) \in L(v)$ for all $v\in V_{G}$.
By the methods of Claim \ref{claim 4.4}(3),  $f[\bigcup_{n\in\omega}A_{n}]$ is infinite. Hence, $\chi_{D_{L}}(G)$ cannot be finite and the statement in the assertion fails. 

We show that the statement holds under K\H{o}nig’s Lemma. Since $G$ is locally finite and connected, $V_{G}$ is 
countably infinite (and hence well-orderable). Let $L=\{L(v)\}_{v\in V_G}$ be an assignment of lists with $|L(z)| = 2\Delta(G)-1$, for all $z\in V_G$. By K\H{o}nig's Lemma, $\bigcup_{z\in V_{G}}L(z)$ is countably infinite (and hence well-orderable) (cf. Definition \ref{Definition 2.5}). 
Let $v\in V_G$ be a vertex with degree $\Delta(G)$. By Observation \ref{Observation 3.2}, without invoking any form of choice, let $T$ be a $BFS$ spanning tree of $G$ rooted at $v$. Let,
\begin{itemize}
    \item $<$ denote the $BFS$ order of $T$, 
    \item $<'$ be a well-ordering of $\bigcup_{z\in V_{G}}L(z)$, and
    \item $<''$ be a well-ordering of $V_{G}$.
\end{itemize} 
For $z\in V_{G}$, $N(z)$ denotes the set of its neighbors in $G$, and $S(z)$ denotes the set of its siblings in $T$. The claim below holds in $\mathsf{ZF}$. 

\begin{claim}\label{claim 7.5}
{\em There are partial functions $f_x: V_{G} \to \bigcup_{z\in V_{G}}L(z)$ for each $x\in V_{G}$ so that the following holds: 
\begin{enumerate}
    \item[(i)] if $x < y$, then $f_x\subseteq f_y$,
    \item[(ii)] the domain of $f_{x}$, i.e., $dom(f_{x})$, includes all vertices till $x$ in the $BFS$ order.
    \item[(iii)] if $f=\bigcup_{x\in V_{G}} f_x$, then $f(w)\in L(w)$ for all $w\in V_{G}$ and $f$ is a proper vertex coloring of $G$. 
    \item[(iv)] With respect to the coloring $f$ mentioned in (iii), each vertex is colored differently from its siblings and from its parent in the $BFS$ spanning tree $T$.
\end{enumerate}  
}
\end{claim}

\begin{proof}
Let, $c_v=min_{<'}(L(v))$. If  $\{v_1,\dots, v_{\Delta(G)}\}$ are the neighbors of $v$ arranged in the $BFS$ order $<$, then let $c_{v_i}=min_{<'}(L(v_{i})\backslash \{c_{v_{j}}:1\leq j< i\})$ where $1\leq i\leq \Delta(G)$. Let $f_{v}=\{(v, c_{v})\}$, $f_{v_{1}}=f_{v}\cup \{(v_{1}, c_{v_{1}})\}$, and $f_{v_{i}}=f_{v_{i-1}}\cup \{(v_{i}, c_{v_{i}})\}$ for each $2\leq i\leq \Delta(G)$.
We proceed inductively in the $BFS$ order $<$ as follows: Let $x\in V_{G}$ be the $<$-minimal element for which no color has been assigned and $z<x$ be the immediate $BFS$ predecessor of $x$. We define $f_x$. Let us write,
$$
    S_<(x) = \{y\in S(x): y<x\} \text{ and } N_<(x) = \{y\in N(x): y<x\}.
$$

Then $f_z$ has already assigned colors to each member of $S_<(x) \cup N_<(x)$. Define,
$$
A_x= L(x)\setminus \Big(\{c_v\} \cup f_z[S_<(x)] \cup f_z[N_<(x)]\Big).
$$

\begin{figure}[!ht]
\centering
\begin{minipage}{\textwidth}
\centering
\begin{tikzpicture}[scale=0.6]
\draw (2, -1) ellipse (6 and 0.5);
\draw (9,-1) node {$N_{<}(x)$};

\draw (6,-3) node {$\bullet$};
\draw (6.5,-3) node {\tiny{$x$}};

\draw (2, -3) ellipse (3 and 0.25);
\draw (-1.8,-3) node {\tiny{$S_{<}(x)$}};

\draw (3,-4.5) rectangle (10,-3.8);
\draw (7.5,-5) node {\tiny{$A_x= L(x)\setminus \Big(\{c_v\} \cup f_z[S_<(x)] \cup f_z[N_<(x)]\Big)$}};
\draw (6,-4.1) node {$\bullet$};
\draw (8,-4.1) node {\tiny{$c=min_{<'}(A_{x})$}};
\draw [-triangle 60] (6,-3) to (6, -3.9);

\draw (3,-1) node {$\bullet$};
\draw (3.5,-1) node {\tiny{$t$}};
\draw (3,-1) to (6,-3);

\draw (3,-3) node {$\bullet$};
\draw (3.5,-3) node {\tiny{$z$}};
\draw (3,-1) to (3,-3);
\draw (2,-3) node {$...$};

\draw (0,-3) node {$\bullet$};
\draw (0.5,-3) node {\tiny{$y$}};
\draw (3,-1) to (0,-3);

\end{tikzpicture}
\end{minipage}
\caption{\em Defining $f_{x}$ when $A_{x}\neq \emptyset$.}
\label{Figure 7}
\end{figure}
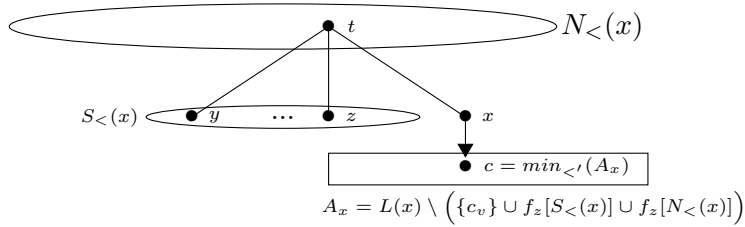

If $A_{x} \neq \emptyset$, then let $f_x = f_z \cup \{(x, c)\} $,
where $c=min_{<'}(A_{x})$ (see Figure \ref{Figure 7}). If $A_x=\emptyset$, then since $|N(x)| \leq \Delta(G)$ and $|S(x)|\leq \Delta(G)-2$, we must have used $|S_<(x)| + |N_<(x)| = 2\Delta(G) -2$ many colors from $L(x)$ to color the neighbors and siblings that come before $x$. 
Thus, the following holds: 

\begin{enumerate}
    \item[$(a)$] $c_v\in L(x)$,
    \item[$(b)$] $S_<(x)=S(x)$ and $N_<(x)=N(x)$,
    \item[$(c)$] $\vert S(x)\vert = \Delta(G) -2$ and $|N(x)| = \Delta(G)$,
    \item[$(d)$] $N(x) \cap S(x) = \emptyset$,
    \item[$(e)$] for all $y\in S(x) \cup N(x)$, we have $f_z(y)\in L(x)$.
\end{enumerate}
In this case, let $f_x = f_z\cup \{(x, c_v) \}$.
In either case, $f_x$ has (i) and (ii). 
Finally, set $f=\bigcup_{x\in V_{G}}f_x$. Clearly, $f$ satisfies (iii) and (iv), and if $f(x)=c_v$ where $x\neq v$, then $(a)-(e)$ must hold.
\end{proof}

Let $c_{v_1},\dots, c_{v_{\Delta(G)}}$ be as in the proof of Claim \ref{claim 7.5}. For any $x\in V_{G}$, we say $x$ has property ($\ast$) if the following holds:
\begin{center}
    {\em $x$ is colored by $c_{v}$,  while all its neighbors are colored by $c_{v_1},\dots, c_{v_{\Delta(G)}}$.}
\end{center}

Generalizing the algorithm of Imrich, Kalinowski, Pil{\'s}niak, and Shekarriz \cite[Theorem 3]{IKPS2017} in the context of list coloring as well as in $\mathsf{ZF}$, we try to keep $v$ as the only vertex of $G$ with property ($\ast$) (cf. \cite[Theorem 3.1]{BMG} for finite graphs).
Then $f$ is a proper distinguishing coloring. 
However, suppose there is a vertex other than $v$, say $x$, with the property $(\ast)$ and is the $<$-minimal such vertex. Then $(a)-(e)$ in Claim \ref{claim 7.5} are satisfied for $x$. We modify $f$ so that $x$ no longer has the property $(\ast)$, while it remains a proper coloring, by analyzing the following cases:\footnote{We note that \textsc{Case 1} and {\em Case A} of \textsc{Subcase 3.1} arise when the lists are nonidentical. Hence, they didn't appear in the algorithm of Imrich, Kalinowski, Pil{\'s}niak and Shekarriz \cite[Theorem 3]{IKPS2017}. Subcase 3.2 is also slightly different in our case.} 

\noindent \textsc{Case 1:} There is $w\in S(x)$ such that $|N(w)| \neq |N(x)|$ (and let $w$ be the $<''$-minimal such element). We color $x$ with $f(w)$,
which can be done by $(e)$. Then there is no color-preserving automorphism of $G$, which fixes $v$ and maps $x$ to $w$. By $(d)$, the new coloring is proper. Moreover, $x$ no longer has the property $(\ast)$.

\noindent \textsc{Case 2:} For every $z\in S(x)$, we have $\vert N(z)\vert = \vert N(x)\vert$, and there is a $w\in S(x)$ such that $N(w)\neq N(x)$ (and let $w$ be the $<''$-minimal such element). By $(c)$, we have $N(x)\backslash N(w)\neq\emptyset$. Fix $y\in N(x)\backslash N(w)$. By $(e)$, we have $f(w), f(y)\in L(x)$ .
We color $x$ with $f(w)$. The rest follows the arguments in \textsc{Case 1}.

\noindent  \textsc{Case 3: } For every $w\in S(x)$, we have $N(w) = N(x)$.

\textsc{Subcase 3.1:} There is a vertex $z\in N(x)$ with no sibling or parent colored with $c_v$ (and let $z$ be the $<''$-minimal such element). We need to consider two different cases.
\begin{enumerate}
    \item[]\textit{Case A:}
    If $c_v\not\in L(z)$, then by $(c)$ at most $(\Delta(G)-1) + (\Delta(G)-2) = 2\Delta(G)-3$ number of colors can appear in the list $L(z)$ that are used for coloring $N(z)\cup S(z)$. Hence, there exists $c\in L(z)$ such that $c\neq f(z)$, which also differs from all the colors assigned to $N(z)\cup S(z)$. Assume $c$ is the $<'$-minimal such element of $L(z)$.
    We color $z$ with $c$ and $x$ with $f(z)$.
    Then $x$ no longer has the property $(\ast)$ as no member of $N(v)$ has color $c$, and this modification keeps the coloring proper.

    Moreover, $z$ does not have the property 
    $(\ast)$ as $z$ is colored by $c$ but $c\neq c_{v}$ since $c\in L(z)$ and $c_{v}\not\in L(z)$.
    
   \item[]  \textit{Case B:} If $c_v\in L(z)$, then color $x$ with $f(z)$ and $z$ with $c_{v}$, which can be done by $(e)$. This modification keeps the coloring proper and $x$ does not have the property $(\ast)$.

   We show that $z$ does not have the property $(\ast)$. For the sake of contradiction, assume that $z$ has the property $(\ast)$. We show that there exists $w\in N(z)$ such that $w$ is not colored by any of $c_{v_{1}},...,c_{v_{\Delta(G)}}$ to obtain a contradiction.  Since $\Delta(G)\geq 3$, we have $\vert S(x) \vert=\Delta(G)-2\geq 1$. Pick any $w\in S(x)$. 
   Since $x$ had the property $(\ast)$ initially, the members of $N(x)$ were colored by the colors of $\{c_{v_{1}},...,c_{v_{\Delta(G)}}\}$. After recoloring, the members of $N(x)$ were colored by the colors of the set $C(z)=(\{c_{v_{1}},...,c_{v_{\Delta(G)}}\}\cup \{c_{v}\})\backslash \{f(z)\}$ where $f(z)\in \{c_{v_{1}},...,c_{v_{\Delta(G)}}\}$. 
   Since $w$ is adjacent to all members of $N(x)$, $w$ is not colored with the colors of $C(z)$ since the modified coloring is proper. Moreover, $w$ is not colored by $f(z)$. Otherwise, $z$ has two neighbors $w$ and $x$ with the same color $f(z)$ which implies that $z$ does not have ($\ast$), and we obtain a contradiction. Thus, $w$ is not colored by any of $c_{v_{1}},...,c_{v_{\Delta(G)}}$.
   
    \end{enumerate}

\textsc{Subcase 3.2:} Each $y \in N(x)$ has a sibling or parent colored with $c_v$. By $(b)$, each $y\in N(x)$ and its siblings must have come before $x$ in the $BFS$ ordering $<$. Hence, their parent cannot have the property ($\ast$), unless it is the root $v$ (since $x$ is assumed to be the $<$-minimal  vertex with the property $(\ast)$).

\begin{figure}[!ht]
\centering
\begin{minipage}{\textwidth}
\centering
\begin{tikzpicture}[scale=0.6]
\draw (2.5, -1) ellipse (6 and 0.5);
\draw (-6,-1) node {\tiny{$N_{<}(x)=N(x)=N(w)$}};

\draw (6,-3) node {$\bullet$};
\draw (8.5,-3) node {\tiny{$x$ (colored with $c_{v}$)}};

\draw (2, -3) ellipse (3 and 0.25);
\draw (-2.5,-3) node {\tiny{$S_{<}(x)=S(x)$}};

\draw (8,-0.5) rectangle (19.6,0.5);
\draw (8,-0.5) rectangle (13.3,0.5);
\draw (8,-0.5) rectangle (16.7,0.5);
\draw (18,-0.9) node {\tiny{$L(z)$}};
\draw (8.7,0) node {$\bullet$};
\draw (11,0) node {\tiny{$c=min_{<'}(L'(z))$}};
\draw (18.2,0) node {$\bullet$};
\draw (14.5,0) node {\tiny{$B(z)$}};
\draw (19,0) node {\tiny{$f(z)$}};

\draw [dashed,-triangle 60] (3,-1) to (8.5, 0);
\draw [dashed,-triangle 60] (6,-3) to (18, -0.2);

\draw (3,-1) node {$\bullet$};
\draw (6,-1) node {$\bullet$};
\draw (0,-1) node {$\bullet$};
\draw (2.5,-1) node {\tiny{$z$}};
\draw (-1,-1) node {$...$};
\draw (3,-1) to (6,-3);
\draw (6,-1) to (6,-3);
\draw (0,-1) to (6,-3);
\draw (3,-1) to (3,0);

\draw (3,0) node {$\bullet$};
\draw (3.5,0) node {\tiny{$p$}};

\draw (3,-3) node {$\bullet$};
\draw (3.5,-3) node {\tiny{$w$}};
\draw (6,-1) to (3,-3);
\draw (0,-1) to (3,-3);
\draw (3,-1) to (3,-3);

\draw (2,-3) node {$...$};

\draw (0,-3) node {$\bullet$};
\draw (3,-1) to (0,-3);
\draw (6,-1) to (0,-3);
\draw (0,-1) to (0,-3);

\end{tikzpicture}
\end{minipage}
\caption{\em In Subcase 3.2, $N(w) = N(x)$ for every $w\in S(x)$, and 
$z \in N(x)$ has a sibling or parent colored with $c_v$. Moreover, $\vert N(x)\vert=\Delta(G)$ and $S(x)=\Delta(G)-2$.}
\label{Figure 8}
\end{figure}
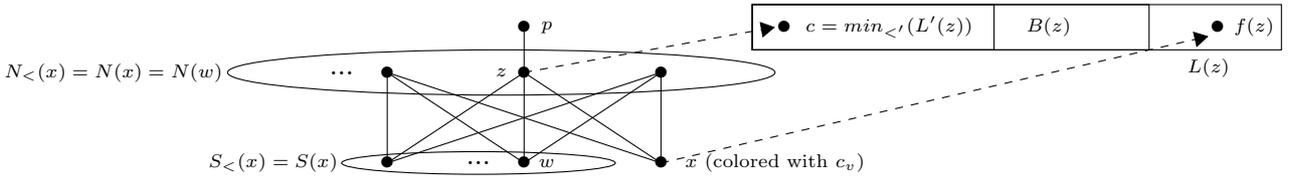

\begin{claim}
    {\em There exists a $z\in N(x)$ such that $z$ is not a child of $v$. }
\end{claim}

\begin{proof}
Otherwise, $z$ is a child of $v$ for all $z\in N(x)$. We recall the following:

\begin{itemize}
    \item $\vert S(x)\vert = \Delta(G) -2$ and $\vert N(x)\vert = \Delta(G)$ (see $(c)$).
    \item For every $w\in S(x)$, we assumed that $N(w) = N(x)$ in Case 3. Thus, $\vert N(w)\vert = \Delta(G)$.
    \item For every $z\in N(x)$, $z$ is a child of $v$ and so $N(z)=\{x\}\cup \{v\}\cup S(x)$ and $\vert N(z)\vert = \Delta(G)$.
\end{itemize}

Consequently, $G$ is isomorphic to a complete bipartite graph $K_{\Delta(G),\Delta(G)}$ where the bipartitions are $N(x)$ and $\{x\}\cup \{v\} \cup S(x)$.
Since $\Delta(G)$ is assumed to be finite, this contradicts the assumption that $G$ is infinite. 
\end{proof}

Without loss of generality, let $z$ be the $<''$-minimal such element. Let $B(z)$ denotes the set of colors assigned to the vertices of $N(z)\cup S(z)$.

\begin{claim}\label{claim 7.6}
{\em The set $L'(z)=L(z)\backslash \{f(z)\cup B(z)\}$ is non-empty.}
\end{claim}
\begin{proof}
Let $p$ be the parent of $z$ in $T$. Then $N(z) = S(x) \cup \{p\}\cup \{x\}$, $\vert S(z)\vert \leq \Delta(G)-2$, and $\vert N(z)\vert = \vert S(x)\cup \{p\}\cup \{x\}\vert=\Delta(G)$. The color $c_v$ was used once for $x$ and once among the vertices of $S(z) \cup \{p\}$. Thus, $\vert B(z)\vert\leq 2\Delta(G)-3$. Since $\vert L(z)\vert= 2\Delta(G)-1$, the rest follows.
\end{proof}

Define $c=min_{<'}(L'(z))$. Using $(e)$, we can color $x$ with $f(z)$ and $z$ with $c$ (see Figure \ref{Figure 8}).  
We note that $z$ does not get property $(
\ast)$. This is because $c\neq c_{v}$ since $c_{v} \in B(z)$.
Similarly, as in the previous cases, the new coloring remains a proper coloring, and $v$ is the only $<$-predecessor of $x$ having the property $(\ast)$, and all of them are fixed as long as $v$ is fixed.

By iterating this procedure at most countably-infinitely many times (since $V_{G}$ is countably infinite), we can recolor every vertex with the property $(\ast)$ and obtain a distinguishing proper coloring $g$ of $G$ such that $g(v)\in L(v)$ for all $v\in V_{G}$.
Hence, $G$ is properly $L$-distinguishable.
\end{proof}
\section{Remarks and Questions}

\begin{remark}\label{Remark 8.1}
The results mentioned in Fact \ref{Fact 1.1}((1)-(6) and (9)) follow from K\H{o}nig's Lemma in $\mathsf{ZF}$.  Following the arguments of Theorem \ref{Theorem 3.3}(2) and \cite[Theorems 3,7]{BR1976}, the statements (1) and (2) in Fact \ref{Fact 1.1} hold in $\mathsf{ZF}$ if the vertex set $V_{G}$ of $G$ is countably infinite. One may modify the algorithms mentioned in the references in Fact \ref{Fact 1.1}((3)-(6) and (9)) to see that those results hold in $\mathsf{ZF}$ + K\H{o}nig's Lemma, if $V_{G}$ is countably infinite. The rest follows from the fact that any locally finite, connected graph is countably infinite under K\H{o}nig's Lemma.
\end{remark}

\begin{question}\label{Question 8.2}
		Are the following statements equivalent to $\mathsf{AC}$ (without assuming that the sets of colors can be well-ordered)?
		\begin{enumerate}
               \item Every connected infinite graph has a chromatic index.
               \item Every connected infinite graph has a distinguishing number.
               \item Every connected infinite graph has a distinguishing index.
                \item Every connected infinite graph has a distinguishing chromatic number.   
			\item Every connected infinite graph has a distinguishing chromatic index.
			\item Every connected infinite graph has a total chromatic number.
			\item Every connected infinite graph has a total distinguishing chromatic number.
                \item Every connected infinite graph has a neighbor-distinguishing number.
                \item Every connected infinite graph has a neighbor-distinguishing index.
		\end{enumerate}
   \end{question}
\section{Appendix}
In this section, we list the complete definitions of the graph coloring parameters.

\begin{defn}\label{Definition 9.1}
A graph $G=(V_{G}, E_{G})$ consists of a set $V_{G}$ of vertices and a set $E_{G}\subseteq [V_{G}]^{2}$ of edges (i.e., $E_{G}$ is a subset of the set of all two-element subsets of $V_{G}$). The {\em degree} of a vertex $v\in V_{G}$, denoted by $deg(v)$, is the number of edges emerging from $v$. We denote by $\Delta(G)$ the maximum degree of $G$. Fix $n\in \omega$. A {\em path of length $n$} is a one-to-one finite sequence $\{x_{i}\}_{0\leq i \leq n}$ of vertices such that for each $i < n$, $\{x_{i}, x_{i+1}\} \in E_{G}$; such a path joins $x_{0}$ to $x_{n}$.  
A {\em ray} is an infinite graph with vertex set $V=\{v_{1},v_{2},...\}$ such that there is an edge between $v_{i}$ and $v_{i+1}$ for each natural number $i$.
The graph $G$ is {\em locally finite} if every vertex of $G$ has a finite degree and it is {\em connected} if any two vertices are joined by a path of finite length. {\em K\H{o}nig’s Lemma} states that every infinite locally finite connected graph has a ray. The cycle graph on $n$ vertices for any natural number $n\geq 3$, is denoted by $C_{n}$. 
A {\em tree} is a connected graph with no cycles, and a {\em spanning tree} $T$ of $G$ is a subgraph that is a tree and contains all the vertices of $G$.
The automorphism group of $G$, denoted by $Aut(G)$, is the group consisting of automorphisms of $G$ with composition as the operation. Let $\tau$ be a group acting on a set $S$ and let $a\in S$. The orbit of $a$, denoted by $Orb_{\tau}(a)$, is the set $\{\phi(a) : \phi \in \tau\}$.
Two vertices $x, y \in V_{G}$ are {\em adjacent vertices} if $\{x, y\} \in E_{G}$, and two edges $e,f\in E_{G}$ are {\em adjacent edges} if they share a common vertex.		
  \begin{enumerate}
			\item A {\em proper vertex coloring} of $G$ with a color set $C$ is a mapping $f:V_{G}\rightarrow C$ such that for every $\{x,y\}\in E_{G}$, $f(x)\not= f(y)$. 
                \item A {\em proper edge coloring} of $G$ with a color set $C$ is a mapping $f:E_{G}\rightarrow C$ such that for any two adjacent edges $e_{1}$ and $e_{2}$, $f(e_{1})\not= f(e_{2})$.  
                \item A {\em total coloring} of $G$ with a color set $C$ is a mapping $f:V_{G}\cup E_{G}\rightarrow C$ such that for every $\{x,y\}\in E_{G}$, $f(x), f(\{x,y\})$ and $f(y)$ are all different and for any two adjacent edges $e_{1}$ and $e_{2}$, $f(e_{1})\not= f(e_{2})$.
			
		\item An {\em odd proper vertex coloring} of $G$ is a proper vertex coloring $f$ such that for each non-isolated vertex $x \in V_{G}$, there exists a color $c$ such that $\vert f^{-1}(c) \cap N(x)\vert$ is an odd integer where $N(v)=\{u \in V_{G} : \{u,v\} \in E_{G}\}$ is the open neighborhood of $v$ for any $v\in V_{G}$ (cf. \cite[Introduction]{CPS2022}).

            \item An automorphism of $G$ is a bijection $\phi: V_{G}\rightarrow V_{G}$ such that $\{u,v\}\in E_{G}$ if and only if $\{\phi(u),\phi(v)\}\in E_{G}$. 
			Let $f$ be an assignment of colors to either vertices or edges of $G$. 
			We say that an automorphism $\phi$ of $G$ {\em preserves $f$} if each vertex (edge) of $G$ is mapped to a vertex (edge) of the same color.
			We say that $f$ is a {\em distinguishing coloring} if the only automorphism that preserves $f$ is the identity and $f$ is a {\em distinguishing proper vertex coloring} if $f$ is a distinguishing coloring as well as a proper vertex coloring. Similarly, we can define a {\em distinguishing proper edge coloring} (cf. \cite[Introduction]{IKPS2017}).

            \item A {\em total distinguishing coloring} of $G$ with a color set $C$ is a mapping
            $g:V_{G}\cup E_{G}\rightarrow C$ such that $g$ is only preserved by the trivial automorphism. Moreover, $g$ is a {\em properly total distinguishing coloring} if $g$ is total distinguishing and it is a total coloring (see \cite{KPW2016}). 
            
			\item Let $\vert C\vert =\kappa$. We say $G$ is {\em $\kappa$-proper vertex colorable ($\kappa$-odd proper vertex colorable, $\kappa$-total colorable,  $\kappa$-proper edge colorable, $\kappa$-distinguishing vertex colorable, $\kappa$-distinguishing proper vertex colorable, $\kappa$-distinguishing edge colorable, $\kappa$-distinguishing proper edge colorable, $\kappa$-distinguishing total colorable, $\kappa$-properly distinguishing total colorable)}  if there is a proper vertex coloring (odd proper vertex coloring, total coloring, proper edge coloring, distinguishing vertex coloring, distinguishing proper vertex coloring, distinguishing edge coloring, distinguishing proper edge coloring, total distinguishing coloring, properly total distinguishing coloring) $f$ with a color set $C$.
            \item The least cardinal $\kappa$ for which $G$ is $\kappa$-proper vertex colorable ($\kappa$-total colorable, $\kappa$-odd proper vertex colorable, $\kappa$-proper edge colorable,  $\kappa$-distinguishing vertex colorable, $\kappa$-distinguishing proper vertex colorable, $\kappa$-distinguishing edge colorable, $\kappa$-distinguishing proper edge colorable, $\kappa$-distinguishing total colorable, $\kappa$-properly distinguishing total colorable) if it exists, is the {\em chromatic number $(\chi(G))$} ({\em total chromatic number $(\chi''(G))$, odd chromatic number $(\chi_{o}(G))$, chromatic index $(\chi'(G))$, distinguishing number $(D(G))$, distinguishing chromatic number $(\chi_{D}(G))$, distinguishing index $(D'(G))$, distinguishing chromatic index $(\chi'_{D}(G))$, total distinguishing number $(D''(G))$, total distinguishing chromatic number ($\chi''_{D}(G)$})) of $G$.

        \item For a $\kappa$-proper edge coloring
$f$ of $G$, $C_{f}(v)$ denotes the set of colors assigned to the edges incident with a vertex $v$. The coloring $f$ is called {\em $\kappa$-neighbor-distinguishing proper edge coloring} if $C_{f}(u) \neq C_{f}(v)$ for any $\{u,v\}\in E_{G}$ (cf. \cite[Introduction]{HCWH2023}).
For a $\kappa$-proper vertex coloring
$f$ of $G$, $N_{f}(v)$ denotes the set of colors of the neighbors of $v$. The $\kappa$-proper vertex coloring $f$ is called {\em neighbor-distinguishing} if $N_{f}(u) \neq N_{f}(v)$ for any $\{u,v\}\in E_{G}$. 
The {\em neighbor-distinguishing index $(\chi'_{N}(G))$} ({\em neighbor-distinguishing number $(\chi_{N}(G))$}) of $G$ is the least cardinal $\kappa$  (if it exists) such that $G$ has a $\kappa$-neighbor-distinguishing proper edge coloring ($\kappa$-neighbor-distinguishing proper vertex coloring). 
\end{enumerate}
\end{defn}

\section{Acknowledgement} The authors are very thankful to the three anonymous referees for reading the manuscript in detail and for providing several comments
and suggestions that improved the quality of the paper.

\end{document}